\pdfoutput=1
\RequirePackage{ifpdf}
\ifpdf 
\documentclass[pdftex]{sigma}
\else
\documentclass{sigma}
\fi

\numberwithin{equation}{section}

\newtheorem{prop}{Proposition}[section]
\newtheorem{lem}[prop]{Lemma}
\newtheorem{cor}[prop]{Corollary}
\newtheorem{thm}[prop]{Theorem}
\newtheorem{assumption}[prop]{Assumption}

{\theoremstyle{definition}
\newtheorem{defn}[prop]{Definition}
\newtheorem{numrmk}[prop]{Remark}
\newtheorem{Example}[prop]{Example}
}

\newcommand{\proj}{\mathbb{P}}

\newcommand{\bP}{\mathbb{P}}

\def\<{\left\langle}
\def\>{\right\rangle}

\def\b1{{\mathbf 1}}

\newcommand{\integer}{\mathbb{Z}}
\newcommand{\rat}{\mathbb{Q}}

\newcommand{\N}{\mathbb{N}}
\newcommand{\Z}{\mathbb{Z}}
\newcommand{\Q}{\mathbb{Q}}
\newcommand{\R}{\mathbb{R}}
\newcommand{\C}{\mathbb{C}}

\newcommand{\bb}{\boldsymbol{b}}

\newcommand{\bx}{\boldsymbol{x}}
\newcommand{\bX}{\mathcal{X}}

\newcommand{\bD}{\mathbb{D}}
\newcommand{\CM}{\mathcal{M}}
\newcommand{\WT}{\widetilde}

\begin{document}

\allowdisplaybreaks

\newcommand{\arXivNumber}{1902.05904}

\renewcommand{\PaperNumber}{055}

\FirstPageHeading

\ShortArticleName{A Note on Disk Counting in Toric Orbifolds}

\ArticleName{A Note on Disk Counting in Toric Orbifolds}

\Author{Kwokwai CHAN~$^{\dag^1}$, Cheol-Hyun CHO~$^{\dag^2}$, Siu-Cheong LAU~$^{\dag^3}$, Naichung Conan LEUNG~$^{\dag^4}$ \newline and Hsian-Hua TSENG~$^{\dag^5}$}

\AuthorNameForHeading{K.~Chan, C.-H.~Cho, S.-C.~Lau, N.-C.~Leung and H.-H.~Tseng}

\Address{$^{\dag^1}$~Department of Mathematics, The Chinese University of Hong Kong, Shatin, Hong Kong}
\EmailDD{\href{mailto:kwchan@math.cuhk.edu.hk}{kwchan@math.cuhk.edu.hk}}

\Address{$^{\dag^2}$~Department of Mathematical Sciences, Research Institute in Mathematics,\\
\hphantom{$^{\dag^2}$}~Seoul National University, Gwanak-Gu, Seoul, South Korea}
\EmailDD{\href{mailto:chocheol@snu.ac.kr}{chocheol@snu.ac.kr}}

\Address{$^{\dag^3}$~Department of Mathematics and Statistics, Boston University, Boston, MA, USA}
\EmailDD{\href{mailto:lau@math.bu.edu}{lau@math.bu.edu}}

\Address{$^{\dag^4}$~The Institute of Mathematical Sciences and Department of Mathematics,\\
\hphantom{$^{\dag^4}$}~The Chinese University of Hong Kong, Shatin, Hong Kong}
\EmailDD{\href{mailto:leung@math.cuhk.edu.hk}{leung@math.cuhk.edu.hk}}

\Address{$^{\dag^5}$~Department of Mathematics, Ohio State University, 100 Math Tower,\\
\hphantom{$^{\dag^5}$}~231 West 18th Ave., Columbus, OH 43210, USA}
\EmailDD{\href{mailto:hhtseng@math.ohio-state.edu}{hhtseng@math.ohio-state.edu}}

\ArticleDates{Received January 24, 2020, in final form June 11, 2020; Published online June 17, 2020}

\Abstract{We compute orbi-disk invariants of compact Gorenstein semi-Fano toric orbifolds by extending the method used for toric Calabi--Yau orbifolds. As a consequence the orbi-disc potential is analytic over complex numbers.}

\Keywords{orbifold; toric; open Gromov--Witten invariants; mirror symmetry; SYZ}

\Classification{53D37; 14J33}

\section{Introduction}

The mirror map plays a central role in the study of mirror symmetry. It provides a canonical local isomorphism between the K\"ahler moduli and the complex moduli of the mirror near a large complex structure limit. Such an isomorphism is crucial to counting of rational curves using mirror symmetry.

The mirror map is a transformation from the complex coordinates of the Hori--Vafa mirror moduli to the canonical coordinates obtained from period integrals. In \cite{cclt} and \cite{cllt}, we derived an enumerative meaning of the inverse mirror maps for toric Calabi--Yau orbifolds and compact semi-Fano toric manifolds in terms of genus 0 open (orbifold) Gromov--Witten invariants (or (orbi-)disk invariants). Namely, we showed that coefficients of the inverse mirror map are equal to generating functions of virtual counts of stable (orbi-)disks bounded by a regular Lagrangian moment map fiber. In particular it gives a way to effectively compute all such invariants.

It is interesting to compare this with the mirror family constructed by Gross--Siebert \cite{GS07}, which is written in canonical coordinates \cite{RS}. In \cite[Conjecture 0.2]{GS07}, it was conjectured that the wall-crossing functions in their construction are generating functions of open Gromov--Witten invariants. Our results verify this conjecture in the toric setting, namely, we showed that the SYZ mirror family~\cite{SYZ}, constructed using open Gromov--Witten invariants, is written in canonical coordinates.

In this short note we extend our method in \cite{cclt} to derive an explicit formula for the orbi-disk invariants in the case of compact Gorenstein semi-Fano toric orbifolds; see Theorem~\ref{thm:formula} for the explicit formulas. This proves \cite[Conjecture]{cclt-OCRC} for such orbifolds, generalizing \cite[Theorem~1.2]{cllt}:
\begin{thm}[open mirror theorem]For a compact Gorenstein semi-Fano toric orbifold, the orbi-disk potential is equal to the $($extended$)$ Hori--Vafa superpotential via the mirror map.
\end{thm}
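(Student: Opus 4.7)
The plan is to adapt the strategy of \cite{cclt} from the toric Calabi--Yau setting to the compact Gorenstein semi-Fano setting. Both sides of the equality are determined by the combinatorial data of the stacky fan together with a collection of basic enumerative invariants, so the proof reduces to matching these invariants after applying the mirror map. In fact, by the structure theorem promised as Theorem~\ref{thm:formula}, the orbi-disk potential admits an explicit closed form, and the task is to verify that this closed form coincides with the composition of the extended Hori--Vafa potential with the mirror map.

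First, I would expand the orbi-disk potential as a sum over effective orbi-disk classes $\beta$ of the virtual counts $n_{1,\beta}$ of one-pointed stable orbi-disks bounded by a regular Lagrangian moment-map fiber, weighted by the appropriate K\"ahler monomials. Using the toric $\T$-action and the analysis of moduli of stable orbi-disks developed in the earlier work, I would decompose each $\beta$ into a ``basic'' disk class $\beta_\nu$ associated to a ray (or a twisted sector of the inertia stack) plus an ``extra'' closed class $\alpha \in H_2^{\mathrm{eff}}(\X)$. The basic class contributes a single monomial in the Hori--Vafa potential, and the extra classes contribute the corrections that the mirror map is designed to absorb.

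Second, I would use the open/closed comparison of~\cite{cclt}: embedding $\X$ into the $\mathbf{P}^1$-bundle (or $\CC$-compactification) associated with an additional ray, the moduli of orbi-disks in $\X$ of class $\beta_\nu + \alpha$ is identified with a boundary stratum of the moduli of genus-zero closed orbifold stable maps in the enlarged orbifold. This allows one to rewrite $n_{1,\beta_\nu+\alpha}$ as a hypergeometric component of the twisted $J$-function of the compact toric orbifold~$\X$. I would then apply the toric mirror theorem of Coates--Corti--Iritani--Tseng, which equates this $J$-function with Givental's $I$-function after the change of variables called the mirror map; since the $I$-function is a hypergeometric lift of the extended Hori--Vafa superpotential, matching coefficients yields the desired identity.

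The principal obstacle, and the main new point compared with~\cite{cclt}, is that on a semi-Fano orbifold many extra classes~$\alpha$ have vanishing canonical pairing but nontrivial contribution, so one cannot dismiss them by degree reasons as in the Calabi--Yau case. The Gorenstein assumption is used here to ensure that the ages of all twisted sectors are integral, so that orbi-disks live in the same degree filtration as their toric counterparts and the $I$-function expansion matches the Hori--Vafa monomials sector by sector; the semi-Fano assumption is used to guarantee that only finitely many classes contribute to each coefficient, hence convergence of the mirror map over $\CC$. Handling these extra contributions, and checking that the resulting sum reproduces exactly the Hori--Vafa potential (including the twisted-sector terms coming from the extended stacky fan), is the technical heart of the argument.
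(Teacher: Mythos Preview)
Your approach is plausible but takes a genuinely different route from the paper. You propose to run the open/closed comparison and the toric mirror theorem directly on the compact orbifold $\bX$, essentially re-deriving the machinery of \cite{cclt} (or extending that of \cite{cllt}) in the compact Gorenstein semi-Fano orbifold setting. The paper instead avoids redoing any of this: for each basic disk class $\beta_{\mathbf d}$ it constructs an open toric \emph{Calabi--Yau} suborbifold $\bX_{\beta_{\mathbf d}}\subset\bX$ (cut out by a facet of the fan polytope containing $\bb_d$), proves by a direct geometric argument (Lemmas~\ref{lem:in_div} and~\ref{lem:in_cy}) that every stable orbi-disk of class $\beta_{\mathbf d}+\alpha$ with $c_1(\alpha)=0$ is actually confined to $\bX_{\beta_{\mathbf d}}$, deduces that the Kuranishi structures agree (Proposition~\ref{prop:op=cl}), and then simply invokes \cite[Theorem~1.4]{cclt} as a black box on the Calabi--Yau suborbifold. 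Thus the ``technical heart'' you anticipate---handling the extra $\alpha$-contributions in the $I$-function---is not what the paper does; instead, those contributions are shown to live over a single facet, where the Calabi--Yau computation already applies. Your route would require establishing the open/closed correspondence for compact semi-Fano toric orbifolds from scratch (note that \cite{cllt} in the manifold case relies on Seidel representations rather than the disk-capping of \cite{cclt}), whereas the paper's route is a short geometric reduction to the already-proven Calabi--Yau case.
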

See \eqref{eq:W} for the definition of the orbi-disc potential. We remark that the open crepant resolution conjecture \cite[Conjecture~1]{cclt-OCRC} may be studied using this computation and techniques of analytical continuation in \cite[Appendix~A]{cclt}.

\begin{cor}There exists an open neighborhood around the large volume limit where the orbi-disk potential converges.
\end{cor}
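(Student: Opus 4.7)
The plan is to derive convergence of the orbi-disk potential as an immediate consequence of Theorem~\ref{thm:formula} (the open mirror theorem stated above). By that theorem, the orbi-disk potential equals the (extended) Hori--Vafa superpotential pulled back through the mirror map, so it suffices to show that each of these two ingredients is analytic on a suitable neighborhood of the large volume limit and that their composition makes sense there.

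First I would recall the explicit form of the (extended) Hori--Vafa superpotential: it is a finite sum of monomials, one contribution for each ray of the stacky fan together with a contribution for each relevant twisted sector of the toric orbifold. Each monomial is a product of the mirror variables (including the orbifold/twisted variables) times a complex parameter, so the Hori--Vafa superpotential is manifestly a Laurent polynomial, hence holomorphic on an open dense subset and in particular on some open neighborhood of the large complex structure limit in the extended Hori--Vafa mirror moduli.

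Next I would argue that the mirror map itself converges near the large volume limit. In the toric (orbifold) setting the mirror map components are read off from the small $I$-function (or equivalently from period integrals) of the toric orbifold, which are hypergeometric-type series whose convergence at the large complex structure point is classical; the Gorenstein semi-Fano hypothesis ensures that the relevant $I$-function coefficients grow at most polynomially in the exponents, so the mirror map has strictly positive radius of convergence in each variable. Consequently there is an open neighborhood $U$ of the large volume limit on which the mirror map is a convergent power series, with image contained in the open set of the Hori--Vafa moduli on which the superpotential is defined.

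Finally, composing the convergent mirror map with the Laurent-polynomial Hori--Vafa superpotential yields a holomorphic function on $U$, and by Theorem~\ref{thm:formula} this composition is exactly the orbi-disk potential. Thus the orbi-disk potential converges on $U$. The only step that is not purely formal is controlling the radius of convergence of the mirror map; here the main obstacle is checking, in the orbifold case, that the hypergeometric coefficients appearing in the extended $I$-function retain the same convergence properties as in the manifold case treated in \cite{cllt}. This can be handled by the standard ratio-test estimates on the Gamma-function coefficients of the $I$-function, using the Gorenstein condition to ensure the relevant exponents are integral and the semi-Fano condition to bound the Novikov/Kähler exponents appropriately.
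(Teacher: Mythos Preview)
Your proposal is correct and follows the same route the paper implicitly takes: the corollary is stated without a separate proof, as an immediate consequence of the open mirror theorem (and Theorem~\ref{thm:formula}), with the comment that it generalizes \cite[Theorem~7.6]{cllt}. The argument you outline---the Hori--Vafa superpotential is a finite Laurent expression, the mirror map is built from the hypergeometric series $A_j^{\bX_{\beta_{\mathbf d}}}(y)$ which converge by Gamma-coefficient estimates under the Gorenstein semi-Fano hypothesis, and the composition is therefore analytic---is exactly the content behind the paper's one-line deduction.

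One small point worth tightening: the mirror map \eqref{eqn:toric_mirror_map_X} goes from $y$ to $(q,\tau)$, whereas the orbi-disk potential is a function of $(q,\tau)$, so what you actually need is convergence of the \emph{inverse} mirror map $y=y(q,\tau)$ appearing in Theorem~\ref{thm:formula}. This is not a gap, since the mirror map has the form $q_a=y_a\exp(\cdots)$, $\tau_{\bb_j}=y_j+\cdots$, hence is a local biholomorphism at the large complex structure limit and the holomorphic inverse function theorem supplies the convergent inverse; but your write-up should say this explicitly rather than speaking of ``composing the convergent mirror map with the Hori--Vafa superpotential,'' which as written points in the wrong direction.
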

This generalizes \cite[Theorem 7.6]{cllt} to the orbifold case.

\section{Preparation}

\subsection{Toric orbifolds}
\subsubsection{Construction}\label{subsec:constr}
Following \cite{BCS}, a {\em stacky fan} is the combinatorial data $(\Sigma, \bb_0,\dots, \bb_{m-1}),$
where $\Sigma$ is a simplicial fan contained in the $\mathbb{R}$-vector space $N_\R := N\otimes_\Z\R$, $N$ is a lattice of rank $n$, and $\{\bb_i \,|\, 0\leq i\leq m-1\}\subset N$ are generators of $1$-dimensional cones of $\Sigma$. $\bb_i$ are called the {\em stacky vectors}.

Choose $\bb_{m}, \dots, \bb_{m'-1}\in N$ so that they are contained in the support of the fan $\Sigma$ and they generate $N$ over $\integer$. An {\em extended stacky fan} in the sense of \cite{Jiang08} is the data
\begin{gather}\label{eqn:ext_stacky_fan}
\big(\Sigma, \{\bb_i\}_{i=0}^{m-1}\cup \{\bb_j\}_{j=m}^{m'-1}\big).
\end{gather}
The vectors $\{\bb_j\}_{j=m}^{m'-1}$ are called {\em extra vectors}.

The {\em fan map} associated to an extended stacky fan (\ref{eqn:ext_stacky_fan}) is defined by
\begin{gather*}\phi\colon \ \widetilde{N}:=\bigoplus_{i=0}^{m'-1}\Z e_i\to N, \qquad \phi(e_i):= \bb_i \quad \textrm{for $i=0,\dots,m'-1$}.\end{gather*}
$\phi$ is surjective and yields an exact sequence of groups called the {\em fan sequence}:
\begin{gather}\label{eqn:fan_seq}
0\longrightarrow \mathbb{L}:=\operatorname{Ker}(\phi)\overset{\psi}{\longrightarrow}\widetilde{N}\overset{\phi}{\longrightarrow} N\longrightarrow 0.
\end{gather}
Clearly $\mathbb{L}\simeq \Z^{m'-n}$. Tensoring (\ref{eqn:fan_seq}) with $\mathbb{C}^\times$ yields the following sequence:
\begin{gather}\label{kexact4}
0 \longrightarrow G:=\mathbb{L}\otimes_\Z \C^\times \longrightarrow \widetilde{N}\otimes_\Z \C^\times \simeq (\C^\times)^{m'} \stackrel{\phi_{\C^\times}}{\longrightarrow} \mathbb{T}:= N\otimes_\Z \C^\times \to 0,
\end{gather}
which is exact. Note that $G$ is an algebraic torus.

By definition, the set of {\em anti-cones} is
\begin{gather*}
\mathcal{A}:=\left\{I\subset \{0, 1, \dots, m'-1\} \,\Big|\, \sum_{i\notin I}\R_{\geq 0} \bb_i \text{ is a cone in } \Sigma\right\}.
\end{gather*}
This terminology is justified because for $I\in \mathcal{A}$, the complement of $I$ in $\{0,1,\dots ,m'-1\}$ indexes generators of a cone in $\Sigma$. For $I\in \mathcal{A}$, the collection $\{Z_i \,|\, i\in I\}$ generates an ideal in $\C[Z_0, \dots, Z_{m'-1}]$, which in turn determines a subvariety $\C^I\subset \C^{m'}$. Set
\begin{gather*}U_\mathcal{A}:=\C^{m'}\setminus \bigcup_{I \notin \mathcal{A}}\C^I.\end{gather*}
The map $G\to (\C^\times)^{m'}$ in \eqref{kexact4} defines a $G$-action on $\C^{m'}$ and hence a $G$-action on $U_\mathcal{A}$. This action is effective and has finite stabilizers, because $N$ is torsion-free (see \cite[Section 2]{Jiang08}). The {\em toric orbifold} associated to $\big(\Sigma, \{\bb_i\}_{i=0}^{m-1}\cup \{\bb_j\}_{j=m}^{m'-1}\big)$ is defined to be the following quotient stack:
\begin{gather*}
\mathcal{X}_\Sigma := [U_\mathcal{A}/G].
\end{gather*}
The standard $(\C^\times)^{m'}$-action on $U_\mathcal{A}$ induces a $\mathbb{T}$-action on $\bX_\Sigma$ via~(\ref{kexact4}).

The coarse moduli space of the toric orbifold $\bX_\Sigma$ is the toric variety $X_\Sigma$ associated to the fan $\Sigma$. In this paper we assume that $X_\Sigma$ is {\em semi-projective}, i.e., $X_\Sigma$ has a $\mathbb{T}$-fixed point and the natural map $X_\Sigma\to \operatorname{Spec}H^0(X_\Sigma, \mathcal{O}_{X_\Sigma})$ is projective, or equivalently, $X_\Sigma$ arises as a GIT quotient of a complex vector space by an abelian group (see \cite[Section~2]{Hausel-Sturmfels}). This assumption is required for the toric mirror theorem of~\cite{CCIT_toricDM} to hold. More detailed discussions on semi-projective toric varieties can be found in \cite[Section~7.2]{CLS_toricbook}.

\subsubsection{Twisted sectors}\label{subsec:tw_sec}
Consider a $d$-dimensional cone $\sigma\in \Sigma$ generated by $\bb_\sigma=(\bb_{i_1}, \dots, \bb_{i_d})$. Define
\begin{gather*}{\rm Box}_{\bb_\sigma} :=\left\{\nu \in N \,\Big|\, \nu=\sum_{k=1}^d t_k \bb_{i_k},\, t_k \in [0,1)\cap\rat\right\}.\end{gather*}
$\{ \bb_{i_1}, \dots, \bb_{i_d}\}$ generates a submodule $N_{\bb_\sigma}\subset N$. One can check that there is a bijection between ${\rm Box}_{\bb_\sigma}$ and the finite group $G_{\bb_{\sigma}} := (N \cap \mathrm{Span}_\R \bb_\sigma)/N_{\bb_{\sigma}}$. Furthermore, if $\tau$ is a subcone of $\sigma$, then ${\rm Box}_{\bb_\tau}\subset {\rm Box}_{\bb_\sigma}$. Define
\begin{gather*}
{\rm Box}_{\bb_\sigma}^{\circ}\colon \
 {\rm Box}_{\bb_\sigma} \setminus \bigcup_{\tau \precneqq \sigma} {\rm Box}_{\bb_\tau},\qquad {\rm Box}(\Sigma) := \bigcup_{\sigma \in \Sigma^{(n)}} {\rm Box}_{\bb_\sigma} = \bigsqcup_{\sigma \in \Sigma} {\rm Box}_{\bb_\sigma}^{\circ}, \\ {\rm Box}'(\Sigma)={\rm Box}(\Sigma)\setminus\{0\},
\end{gather*}
where $\Sigma^{(n)}$ is the set of $n$-dimensional cones in $\Sigma$.

 Following the description of the inertia orbifold of $\bX_\Sigma$ in \cite{BCS}, for $\nu\in{\rm Box}(\Sigma)$, we denote by $\bX_\nu$ the corresponding component of the inertia orbifold of $\bX:=\bX_\Sigma$. Note that $\bX_0=\bX_\Sigma$ as orbifolds. Elements $\nu\in {\rm Box}'(\Sigma)$ correspond to {\em twisted sectors} of $\bX$, namely non-trivial connected components of the inertia orbifold of $\bX$.

 Following \cite{CR04}, the direct sum of singular cohomology groups of components of the inertia orbifold of $\bX$, subject to a degree shift, is called the {\em Chen--Ruan orbifold cohomology} $H^*_\mathrm{CR}(\mathcal{X};\rat)$ of $\bX$. More precisely,
\begin{gather*}H^d_\mathrm{CR}(\mathcal{X};\rat)=\bigoplus_{\nu\in{\rm Box}}H^{d-2\operatorname{age}(\nu)}(\mathcal{X}_\nu;\rat),\end{gather*}
where $\operatorname{age}(\nu)$ is called the {\em degree shifting number}\footnote{Following Miles Reid, it is now more commonly called {\em age}.} in \cite{CR04} of the twisted sector $\mathcal{X}_\nu$. In case of toric orbifolds, $\operatorname{age}$ has a combinatorial description \cite{BCS}: if $\nu=\sum\limits_{k=1}^d t_k \bb_{i_k} \in {\rm Box}(\Sigma)$ where $\{\bb_{i_1},\dots,\bb_{i_d}\}$ generates a cone in $\Sigma$, then \begin{gather*}\operatorname{age}(\nu) = \sum_{k=1}^d t_k \in \rat_{\geq0}.\end{gather*}

Using $\mathbb{T}$-actions on twisted sectors induced from that on $\bX$, we can define {\em $\mathbb{T}$-equivariant Chen--Ruan orbifold cohomology} $H^*_{\mathrm{CR}, \mathbb{T}}(\mathcal{X};\rat)$ by replacing singular cohomology with $\mathbb{T}$-equivariant cohomology $H^*_\mathbb{T}(-)$. Namely
\begin{gather*}H^d_{\mathrm{CR}, \mathbb{T}}(\mathcal{X};\rat)=\bigoplus_{\nu\in{\rm Box}}H^{d-2\operatorname{age}(\nu)}_{\mathbb{T}}(\mathcal{X}_\nu;\rat).\end{gather*}
By general properties of equivariant cohomology, $H^*_{\mathrm{CR}, \mathbb{T}}(\mathcal{X};\rat)$ is a module over $H^*_\mathbb{T}(\text{pt}, \mathbb{Q})$ and admits a map $H^*_{\mathrm{CR}, \mathbb{T}}(\mathcal{X};\rat)\to H^*_{\mathrm{CR}}(\mathcal{X};\rat)$ called {\em non-equivariant limit}.

\subsubsection{Toric divisors, K\"ahler cones, and Mori cones}\label{sec:div_cone_etc}

We continue using the notations in Sections \ref{subsec:constr} and~\ref{subsec:tw_sec}. Applying $\operatorname{Hom}_\Z(-,\Z)$ to the fan sequence (\ref{eqn:fan_seq}), we obtain the following exact sequence:\footnote{The map $\psi^\vee: \widetilde{M}\to \mathbb{L}^\vee$ is surjective since $N$ is torsion-free.}
\begin{gather*}
0\longrightarrow M:= N^\vee = \operatorname{Hom}(N, \Z) \overset{\phi^\vee}{\longrightarrow} \widetilde{M}:= \widetilde{N}^\vee = \operatorname{Hom}\big(\widetilde{N},\Z\big)\overset{\psi^\vee}{\longrightarrow} \mathbb{L}^\vee= \operatorname{Hom}(\mathbb{L},\Z) \longrightarrow 0,
\end{gather*}
which is called the {\em divisor sequence}. Line bundles on $\bX=[\mathcal{U}_\mathcal{A}/G]$ correspond to $G$-equivariant line bundles on $\mathcal{U}_\mathcal{A}$. In view of (\ref{kexact4}), $\mathbb{T}$-equivariant line bundles on $\bX$ correspond to $(\mathbb{C}^\times)^{m'}$-equivariant line bundles on $\mathcal{U}_\mathcal{A}$. Because the codimension of $\cup_{I\notin \mathcal{A}} \mathbb{C}^I\subset \mathbb{C}^{m'}$ is at least $2$, the Picard groups satisfy: \begin{gather*}
\operatorname{Pic}(\bX)\simeq \operatorname{Hom}(G, \mathbb{C}^\times)\simeq \mathbb{L}^\vee, \qquad \operatorname{Pic}_\mathbb{T}(\bX)\simeq \operatorname{Hom}\big((\mathbb{C}^\times)^{m'}, \mathbb{C}^\times\big)\simeq \widetilde{N}^\vee=\widetilde{M}.\end{gather*}
The natural map $\operatorname{Pic}_\mathbb{T}(\bX)\to \operatorname{Pic}(\bX)$ is identified with the map $\psi^\vee\colon \widetilde{M}\to \mathbb{L}^\vee$ appearing in the divisor sequence.

The elements $\big\{e_i^\vee \,|\, i=0, 1, \dots, m'-1 \big\}\subset \widetilde{M}\simeq \operatorname{Pic}_\mathbb{T}(\bX)$ dual to $\{e_i \,|\, i=0, 1, \dots, m'-1\}\subset \widetilde{N}$ correspond to $\mathbb{T}$-equivariant line bundle on $\bX$ which we denote by $D_i^\mathbb{T}$, $i=0, 1, \dots, m'-1$. The collection \begin{gather*}\big\{D_i:=\psi^\vee\big(e_i^\vee\big) \,|\, 0\leq i\leq m-1\big\}\subset \mathbb{L}^\vee \simeq \operatorname{Pic}(\bX)\end{gather*} consists of toric prime divisors corresponding to the generators $\{\bb_i \,|\, 0\leq i\leq m-1\}$ of $1$-di\-men\-sional cones in $\Sigma$. Elements $D^\mathbb{T}_i$, $0\leq i\leq m-1$ are $\mathbb{T}$-equivariant lifts of these divisors. There are natural maps
\begin{gather*}
\widetilde{M}\otimes \mathbb{Q}\overset{\psi^\vee\otimes \mathbb{Q}}{\rightarrow} \mathbb{L}^\vee\otimes \mathbb{Q},\\
\big(\widetilde{M}\otimes\mathbb{Q}\big) \Big/ \left(\sum_{j=m}^{m'-1}\mathbb{Q}D^\mathbb{T}_j\right)\simeq H_\mathbb{T}^2(\bX, \mathbb{Q})\to H^2(\bX, \mathbb{Q}) \simeq \big(\mathbb{L}^\vee\otimes\mathbb{Q}\big) \Big/ \left(\sum_{j=m}^{m'-1}\mathbb{Q}D_j\right).
\end{gather*}
Together with the natural quotient maps, they fit into a commutative diagram.

As explained in \cite[Section 3.1.2]{iritani09}, there is a canonical splitting of the quotient map $\mathbb{L}^\vee\otimes \mathbb{Q}\to H^2(\bX; \mathbb{Q})$. For $m\leq j \leq m'-1$, let $I_j\in \mathcal{A}$ be the anticone of the cone containing $\bb_j$. This allows us to write
$\bb_j=\sum\limits_{i\notin I_j} c_{ji}\bb_i$ for $c_{ji}\in\rat_{\geq 0}$.

Tensoring the fan sequence \eqref{eqn:fan_seq} with $\mathbb{Q}$, we may find a unique $D_j^\vee\in \mathbb{L}\otimes \mathbb{Q}$ such that values of the natural pairing $\langle -,-\rangle$ between $\mathbb{L}^\vee$ and $\mathbb{L}$ satisfy
\begin{gather}\label{eqn:dual_of_D_j}
\langle D_i, D_j^\vee\rangle
=
\begin{cases}
1 & \textrm{if } i=j,\\
-c_{ji} & \textrm{if } i\notin I_j,\\
0 & \textrm{if } i\in I_j\setminus \{j\}.
\end{cases}
\end{gather}
Using $D_j^\vee$ we get a decomposition
\begin{gather}\label{eqn:splitting_L_dual}
\mathbb{L}^\vee\otimes \mathbb{Q}
=\operatorname{Ker}\big(\big(D_{m}^\vee, \dots, D_{m'-1}^\vee \big)\colon \mathbb{L}^\vee\otimes \mathbb{Q}\to \mathbb{Q}^{m'-m}\big)\oplus \bigoplus_{j=m}^{m'-1}\mathbb{Q}D_j.
\end{gather}
We can view $H^2(\bX; \mathbb{Q})$ as a subspace of $\mathbb{L}^\vee\otimes\mathbb{Q}$ because $\operatorname{Ker}\big(\big(D_{m}^\vee, \dots, D_{m'-1}^\vee \big)\big)$ can be identified with $H^2(\bX; \mathbb{Q})$ via the map $\mathbb{L}^\vee\otimes\mathbb{Q}\to H^2(\bX; \mathbb{Q})$.

Define {\em extended K\"ahler cone} of $\bX$ to be
\begin{gather*}\widetilde{C}_\bX := \bigcap_{I\in \mathcal{A}}\left(\sum_{i\in I}\mathbb{R}_{>0}D_i \right)\subset \mathbb{L}^\vee\otimes\mathbb{R}.\end{gather*}
The K\"ahler cone $C_\bX$ is the image of $\widetilde{C}_\bX$ under $\mathbb{L}^\vee\otimes \mathbb{R}\to H^2(\bX; \R)$. The splitting \eqref{eqn:splitting_L_dual} of $\mathbb{L}^\vee\otimes\mathbb{Q}$ yields a splitting $\widetilde{C}_\bX = C_\bX+\sum\limits_{j=m}^{m'-1} \mathbb{R}_{>0}D_j.$

By (\ref{eqn:fan_seq}), $\mathbb{L}^\vee$ has rank equal to $r:=m'-n$. The rank of $H_2(\bX; \mathbb{Z})$ is $r':=r-(m'-m)=m-n.$ We choose an integral basis
\begin{gather*}\{p_1, \dots, p_r\}\subset \mathbb{L}^\vee,\end{gather*}
such that $p_a$ is in the closure of $\widetilde{C}_\bX$ for all $a$ and $p_{r'+1}, \dots, p_r \in \sum\limits_{i=m}^{m'-1}\mathbb{R}_{\geq 0}D_i$. We get a nef basis $\{\bar{p}_1,\dots,\bar{p}_{r'}\}$ for $H^2(\bX;\Q)$ as images of $\{p_1,\dots,p_{r'}\}$ under the quotient map $\mathbb{L}^\vee\otimes \Q\to H^2(\bX; \Q)$. For $r'+1\leq a\leq r$, the images satisfies $\bar{p}_a = 0$.

We choose equivariant lifts of $p_a$'s, namely $\big\{p_1^\mathbb{T}, \dots, p_r^\mathbb{T}\big\}\subset \widetilde{M}\otimes \mathbb{Q}$ such that $\psi^\vee\big(p_a^\mathbb{T}\big)=p_a$ for all $a$. We also require that for $a=r'+1,\dots ,r$ the images $\bar{p}_a^{\mathbb{T}}$ of $p_a^\mathbb{T}$ under the natural map $\widetilde{M}\otimes \mathbb{Q}\to H_\mathbb{T}^2(\bX, \mathbb{Q})$ satisfies $\bar{p}_a^{\mathbb{T}}=0$.

The coefficients $Q_{ia}\in\mathbb{Z}$ in the equations $D_i = \sum\limits_{a=1}^r Q_{ia} p_a$ assemble to a matrix $(Q_{ia})$. The images\footnote{$\bar{D}_i$ is the class of the toric prime divisor $D_i$.} $\bar{D}_i$ of $D_i$ under the map $\mathbb{L}^\vee\otimes \Q\to H^2(\bX; \Q)$ can be expressed as
\begin{gather*}\bar{D}_i = \sum_{a=1}^{r'} Q_{ia}\bar{p}_a, \qquad i=0,\dots,m-1.\end{gather*}
Their equivariant lifts $\bar{D}^\mathbb{T}_i$ can be expressed as
\begin{gather*}\bar{D}^\mathbb{T}_i = \sum_{a=1}^{r'} Q_{ia}\bar{p}^\mathbb{T}_a+\lambda_i, \qquad \text{where} \quad \lambda_i\in H^2(B\mathbb{T}; \mathbb{Q}).\end{gather*}
For $i=m,\dots, m'-1$, we have $\bar{D}_i=0$ in $H^2(\bX; \R)$ and $\bar{D}^\mathbb{T}_i=0$.

Localization gives the following description of $H_{\mathrm{CR}, \mathbb{T}}^{\leq 2}$:
\begin{gather*}H^0_{\mathrm{CR}, \mathbb{T}}(\bX, K_\mathbb{T})=K_\mathbb{T}{\bf 1}, \qquad H^2_{\mathrm{CR}, \mathbb{T}}(\bX, K_\mathbb{T})=\bigoplus_{a=1}^{r'} K_\mathbb{T} \bar{p}_a^{\mathbb{T}}\oplus \bigoplus_{\nu\in \text{Box}, \text{age}(\nu)=1} K_\mathbb{T} {\bf 1}_\nu.\end{gather*}
Here $K_\mathbb{T}$ is the field of fractions of $H_\mathbb{T}^*(\text{pt}, \mathbb{Q})$, ${\bf 1}\in H^0(\bX, \rat)$ and ${\bf 1}_\nu\in H^0(\bX_\nu, \rat)$ are
fundamental classes.

Let
\begin{gather*}\{\gamma_1, \dots, \gamma_r\} \subset \mathbb{L}, \qquad \gamma_a = \sum_{i=0}^{m'-1} Q_{ia}e_i \in \widetilde{N},\end{gather*}
be the basis dual to $\{p_1, \dots, p_r\}\subset \mathbb{L}^\vee$. $H_2^{\rm eff}(\bX;\Q)$ admits a basis $\{\gamma_1,\dots,\gamma_{r'}\}$, and we have $Q_{ia} = 0$ when $m\leq i\leq m'-1$ and $1\leq a\leq r'$.

Set
\begin{gather*}
\mathbb{K} :=\{d\in\mathbb{L}\otimes\rat \,|\, \{j\in\{0, 1,\dots,m'-1\}\,|\,\langle D_j,d\rangle\in\Z\}\in\mathcal{A}\},\\
\mathbb{K}_{\rm eff} :=\{d\in\mathbb{L}\otimes\rat \,|\, \{j\in\{0,1,\dots,m'-1\}\,|\,\langle D_j,d\rangle\in\Z_{\geq0}\}\in\mathcal{A}\}.
\end{gather*}
Elements of $\mathbb{K}_{\rm eff}$ should be interpreted as effective curve classes. Elements of $\mathbb{K}_{\rm eff} \cap H_2(\bX;\R)$ should be viewed as classes of stable maps $\proj(1,m) \to \bX$ for some $m \in \Z_{\geq0}$. See, e.g., \cite[Section~3.1]{iritani09} for more details.

\begin{defn} \label{defn:sF}
A toric orbifold $\bX$ is called {\em semi-Fano} if $c_1(\bX) \cdot \alpha > 0$ for every effective curve class $\alpha$, in other words, $-K_\bX$ is nef.
\end{defn}

For $d\in\mathbb{K}$, put\footnote{For a real number $\lambda\in\R$, let $\lceil \lambda \rceil$, $\lfloor \lambda \rfloor$ and $\{\lambda\}$ denote the ceiling, floor and fractional part of $\lambda$ respectively.}
\begin{gather*}
\nu(d):=\sum_{i=0}^{m'-1}\lceil\langle D_i,d\rangle\rceil\bb_i\in N,
\end{gather*}
and let $I_d := \{j\in\{0, 1,\dots,m'-1\}\,|\,\langle D_j,d\rangle\in\Z\} \in \mathcal{A}$. Then $\nu(d)\in{\rm Box}$ because
\begin{gather*}\nu(d) = \sum_{i=0}^{m'-1}(\{-\langle D_i,d\rangle\}+\langle D_i,d\rangle)\bb_i
= \sum_{i=0}^{m'-1} \{-\langle D_i,d\rangle\}\bb_i
= \sum_{i\notin I_d} \{-\langle D_i,d\rangle\}\bb_i.\end{gather*}

\subsection{Genus 0 open orbifold GW invariants according to \cite{CP}}\label{sec:review_disk_inv}

Let $(\bX,\omega)$ be a toric K\"ahler orbifold of complex dimension $n$, equipped with the standard toric complex structure $J_0$ and a toric K\"ahler structure $\omega$. Denote by $(\Sigma,\bb)$ the stacky fan that defines $\bX$, where $\bb=(\bb_0,\dots,\bb_{m-1})$ and $\bb_i=c_iv_i$.

Let $L \subset \bX$ be a Lagrangian torus fiber of the moment map $\mu_0\colon \bX \to M_\R := M\otimes_\Z \R$, and let $\beta \in \pi_2(\bX,L) = H_2(\bX,L;\integer)$ be a relative homotopy class.

\subsubsection{Holomorphic orbi-disks and their moduli spaces}\label{sec:orbidisk_and_moduli}
A {\em holomorphic orbi-disk} in $\bX$ with boundary in $L$ is a continuous map
\begin{gather*}w\colon \ (\bD,\partial\bD) \to (\bX,L)\end{gather*}
satisfying the following conditions:
\begin{enumerate}\itemsep=0pt
\item
$\big(\bD, z_1^+,\dots,z_l^+\big)$ is an {\em orbi-disk} with interior marked points $z_1^+,\dots,z_l^+$. More precisely $\bD$ is analytically the disk $D^2\subset\C$ so that for $j=1,\dots,l$, the orbifold structure at $z_j^+$ is given by a disk neighborhood of $z_j^+$ uniformized by the branched covering map $\operatorname{br}\colon z \to z^{m_j}$ for some $m_j\in \integer_{>0}$. (If $m_j=1$, $z_j^+$ is not an orbifold point.)
\item
For any $z_0 \in \bD$, there is a disk neighborhood of $z_0$ with a branched covering map $\operatorname{br} \colon z \allowbreak \to z^m$, and there is a local chart $\big(V_{w(z_0)},G_{w(z_0)},\pi_{w(z_0)}\big)$ of $\mathcal{X}$ at $w(z_0)$ and a local holomorphic lifting $\WT{w}_{z_0}$ of $w$ satisfying
$w \circ \operatorname{br} = \pi_{w(z_0)} \circ \WT{w}_{z_0}.$

\item
The map $w$ is {\em good} (in the sense of Chen--Ruan \cite{CR01}) and {\em representable}. In particular, for each $z_j^+$, the associated group homomorphism
\begin{gather*}
h_p\colon \ \Z_{m_j}\to G_{w(z_j^+)}
\end{gather*}
between local groups which makes $\WT{w}_{z_j^+}$ equivariant, is {\em injective}.
\end{enumerate}

The {\em type} of a map $w$ as above is defined to be $\bx := (\bX_{\nu_1},\dots, \bX_{\nu_l})$. Here $\nu_j\in{\rm Box}(\Sigma)$ is the image of the generator $1\in \Z_{m_j}$ under $h_j$.

There are two notions of Maslov index for an orbi-disk. The {\em desingularized Maslov index} $\mu^{\rm de}$ is defined by desingularizing the interior singularities of the pull-back bundle $w^*T\bX$. Namely, the bundle $w^*T\bX$ over an orbi-disk $\big(\bD, z_1^+,\dots,z_l^+\big)$ cannot be trivialized due to the orbifold structure, but we can obtain another bundle $|w^*T\bX|$ by modifying the bundle near orbifold points (see Chen--Ruan \cite{CR01} for more details). This is called a desingularization of $w^*T\bX$ and it is a smooth bundle over the orbi-disk, hence is a trivial bundle. We can compute the Maslov index of the boundary Lagrangian loop relative to this trivialization, and it is called the desingularized Maslov index. See \cite[Section~3]{CP} for more details and \cite[Section~5]{CP} for an explicit formula in the toric case.

The {\em Chern--Weil $($CW$)$ Maslov index} $\mu_{\rm CW}$ is defined as the integral of the curvature of a~unitary connection on $w^*T\bX$ which preserves the Lagrangian boundary condition, see~\cite{CS} (and also \cite[Section~3.3]{CP} for a relation with $\mu^{\rm de}$). The following lemma, which appeared as \cite[Lemma~3.1]{cclt}, computes the CW Maslov indices of disks. This is an orbifold version of the formula in \cite[Lemma~3.1]{auroux07}.
\begin{lem}\label{lem:maslov_ind_comp}
Let $(\bX, \omega, J)$ be a K\"ahler orbifold of complex dimension $n$. Let $\Omega$ be a non-zero meromorphic $n$-form on $\bX$ which has at worst simple poles. Let $D\subset \bX$ be the pole divisor of $\Omega$. Suppose also that the generic points of $D$ are smooth. Then for a special Lagrangian submanifold $L \subset \bX \setminus D$, the CW Maslov index of a class $\beta \in \pi_2(\bX,L)$ is given by
\begin{gather*}
\mu_{\rm CW}(\beta) = 2\beta \cdot D.
\end{gather*}
Here, $\beta \cdot D$ is defined by writing $\beta$ as a fractional linear combination of homotopy classes of smooth disks.
\end{lem}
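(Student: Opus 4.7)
The plan is to adapt Auroux's argument for the smooth case (\cite[Lemma~3.1]{auroux07}) to the orbifold setting. The first step is to reinterpret the Chern--Weil Maslov index as (twice) the relative first Chern number. Concretely, for an orbi-disk $w\colon (\bD,\partial \bD)\to(\bX,L)$, the integral of the curvature of any unitary connection on $w^*T\bX$ that preserves $TL$ along $\partial \bD$ computes $2c_1^{\mathrm{rel}}(w^*T\bX, (w|_{\partial \bD})^*TL)$ evaluated on the fundamental class of the disk; this is the Chern--Weil representation of the relative Chern class and is independent of the chosen connection.

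Next, I would use the special Lagrangian condition to produce a natural trivialization of the canonical bundle near $L$. Since $L\subset \bX\setminus D$ is special Lagrangian with respect to $\Omega$, the restriction $\Omega|_L$ is a nowhere vanishing real volume form (up to an overall phase), so $\Omega$ trivializes $K_\bX|_{\bX\setminus D}$ in a way compatible with the real structure on $L$. Dually, $\Omega^{-1}$ is a nonvanishing section of $\det_\C T\bX$ away from $D$, and is compatible with the boundary framing of $(w^*T\bX,TL)$ along $\partial \bD$. The relative Chern number can therefore be computed by counting zeros (with multiplicity and sign) of the pulled-back section $w^*\Omega^{-1}$ of $\det_\C w^*T\bX$ on $\bD$.

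Since $\Omega$ has at worst simple poles along $D$, the section $\Omega^{-1}$ vanishes to order exactly one on the smooth locus of $D$, so the zero count equals the intersection number $w(\bD)\cdot D$. This yields $\mu_{\rm CW}(\beta)=2\beta\cdot D$ for any smooth disk class $\beta$.

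For a general orbi-disk class $\beta$, the strategy is to write $\beta$ as a fractional linear combination of smooth disk classes, as stipulated in the statement. Both sides of the formula are linear in $\beta$ once so decomposed: the left-hand side because the CW index is defined as a curvature integral (hence additive under connected sum and linear under multiple covers) and the right-hand side by the very definition of $\beta\cdot D$ in the orbifold setting. The one point requiring care is that the orbifold points of $w$ lie in the interior of $\bD$ and that $L\cap D=\emptyset$; the latter lets us arrange, after a small perturbation supported away from $L$, that any orbifold points of $\bX$ along $w$ not already avoided by smoothness of $D$ at its generic points can be handled fiberwise. The main technical obstacle, and essentially the only one, is verifying this fractional additivity together with the perturbation argument near orbifold strata intersecting $D$; once these are in place the smooth computation from the first three paragraphs immediately gives the orbifold formula.
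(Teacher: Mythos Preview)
The paper does not actually prove this lemma: it is stated with the attribution ``which appeared as \cite[Lemma~3.1]{cclt}'' and described as ``an orbifold version of the formula in \cite[Lemma~3.1]{auroux07}'', with no proof given in the present paper. So there is no in-paper argument to compare against.

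That said, your outline is exactly the expected adaptation of Auroux's proof, and is the approach taken in the cited reference: identify $\mu_{\rm CW}$ with twice the relative first Chern number via the curvature integral, use the special Lagrangian condition so that $\Omega$ gives a boundary trivialization of $\det_{\C} T\bX$ along $L$, and then count zeros of $\Omega^{-1}$ (which are simple along the smooth locus of $D$) to obtain $\beta\cdot D$ for smooth disks. The extension to orbi-disk classes by linearity is also correct and essentially forced by how both sides are set up: the left side is a curvature integral and the right side is \emph{defined} via a fractional decomposition into smooth disk classes. One minor comment: your last paragraph about perturbing near orbifold strata of $D$ is not really needed and slightly muddies the argument---the hypothesis that generic points of $D$ are smooth already ensures the zero-counting interpretation of $\beta\cdot D$ makes sense for smooth disks, and the orbifold contribution is absorbed entirely into the linear extension step rather than requiring any further local analysis.
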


The classification of orbi-disks in a symplectic toric orbifold has been worked out in \cite[Theorem~6.2]{CP}. It is similar to the classification of holomorphic discs in toric manifolds~\cite{cho06}. In the classification, the {\em basic disks} corresponding to the stacky vectors (and twisted sectors) play a basic role.
\begin{thm}[{\cite[Corollaries 6.3 and 6.4]{CP}}]
Let $\bX$ be a toric K\"ahler orbifold and let $L$ be a~fiber of the toric moment map.
\begin{enumerate}\itemsep=0pt
\item[$1.$] The {\em smooth} holomorphic disks of Maslov index $2$ $($modulo $\mathbb{T}^n$-action and reparametrizations of the domain$)$ are in bijective correspondence with the stacky vectors $\{\bb_0,\dots,\bb_{m-1}\}$. Denote the homotopy classes of these disks by $\beta_{0},\dots,\beta_{m-1}$.
\item[$2.$] The holomorphic orbi-disks with one interior orbifold marked point and desingularized Maslov index~$0$ $($modulo $\mathbb{T}^n$-action and reparametrizations of the domain$)$ are in bijective correspondence with the twisted sectors $\nu \in {\rm Box}'(\Sigma)$ of the toric orbifold $\bX$. Denote the homotopy classes of these orbi-disks by $\beta_\nu$.
\end{enumerate}
\end{thm}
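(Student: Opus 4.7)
The idea is to exploit the GIT presentation $\bX=[U_\mathcal{A}/G]$: any representable holomorphic (orbi-)disk $w\colon(\bD,\partial\bD)\to(\bX,L)$ lifts, via the local branched covers $z\mapsto z^{m_j}$ at the orbifold marked points $z_j^+$, to a $G$-equivariant holomorphic map
\begin{gather*}
\widetilde{w}=(\widetilde{w}_0,\dots,\widetilde{w}_{m'-1})\colon \widetilde{\bD}\to U_\mathcal{A}\subset\C^{m'},
\end{gather*}
where $\widetilde{\bD}$ is the resulting branched cover of $\bD$. Because $L$ is a moment map fiber, its lift $\widetilde{L}$ is a $G$-orbit of a product of circles $\{|\widetilde{w}_i|=r_i\}$, so each component $\widetilde{w}_i$ is a holomorphic function on $\widetilde{\bD}$ with boundary values on a fixed circle. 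By the argument principle, the total vanishing order of $\widetilde{w}_i$ equals its boundary winding number $k_i\in\Z_{\ge 0}$; the tuple $(k_0,\dots,k_{m'-1})$ modulo the $G$-action determines the class $\beta\in\pi_2(\bX,L)$, and Lemma~\ref{lem:maslov_ind_comp} applied to the toric anticanonical divisor $D=\sum_{i=0}^{m-1}D_i$ gives $\mu_{\rm CW}(\beta)=2\sum_{i=0}^{m-1}\beta\cdot D_i$.

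\textbf{Part (1).} For a smooth disk of Maslov index~$2$, the identity forces $\sum_{i=0}^{m-1}\beta\cdot D_i=1$. Using the $G$-freedom to kill the winding numbers associated to extra vectors, exactly one $\widetilde{w}_{i_0}$ with $0\le i_0\le m-1$ has a single simple zero, while every other $\widetilde{w}_i$ is a holomorphic function of zero winding with boundary on a fixed circle, hence a constant by the maximum modulus principle. Reparametrization of $\bD$ places the zero at the origin, and the residual $G\times\mathbb{T}$-symmetry absorbs the overall rescaling, recovering exactly the basic disk $\beta_{i_0}$.

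\textbf{Part (2).} Let $w$ be an orbi-disk with one orbifold marked point $z_1^+$ of order $m_1$ and $\mu^{\rm de}=0$. Representability supplies an injection $h\colon\Z_{m_1}\hookrightarrow G_{w(z_1^+)}$; writing $\nu=\sum t_k\bb_{i_k}\in{\rm Box}'(\Sigma)$ for the image of $1\in\Z_{m_1}$, $G$-equivariance of $\widetilde{w}$ forces each coordinate $\widetilde{w}_i$ to vanish at the preimage of $z_1^+$ to order $m_1 t_i$ (with $t_i=0$ for $i$ outside the cone supporting $\nu$). The hypothesis $\mu^{\rm de}=0$ translates through the desingularization correction recalled in Section~\ref{sec:review_disk_inv} into $\mu_{\rm CW}(\beta)=2\operatorname{age}(\nu)$, which matches precisely the vanishing already accounted for at $z_1^+$ and leaves no room for additional interior zeros or nonzero boundary winding. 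Each $\widetilde{w}_i$ is therefore the monomial $c_i z^{m_1 t_i}$, and the $G\times\mathbb{T}$-symmetry combined with reparametrization reduces $w$ to the basic orbi-disk $\beta_\nu$.

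\textbf{Main obstacle.} The delicate step is the orbi-disk bookkeeping at $z_1^+$: one must carry out the local analysis of the desingularized bundle $|w^*T\bX|$ versus $w^*T\bX$ in a uniformizing chart, and verify that representability together with $\mu^{\rm de}=0$ genuinely forces the monomial shape of the lift, with no interior zeros or boundary winding hidden under the $G\times\mathbb{T}$-action or the reparametrization freedom. Once this local model is nailed down, the global uniqueness statements in both parts follow by a direct matching of coordinate data.
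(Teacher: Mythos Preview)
The paper does not prove this theorem; it is quoted from \cite[Corollaries~6.3 and~6.4]{CP} and stated without proof as background in Section~\ref{sec:orbidisk_and_moduli}. There is therefore no in-paper argument to compare your proposal against.

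That said, your sketch follows the strategy of the original source \cite{CP}, which extends the Cho--Oh classification \cite{cho06} to the orbifold setting: lift to homogeneous coordinates on $U_\mathcal{A}\subset\C^{m'}$, observe that each coordinate is a holomorphic function on the disk with boundary on a fixed circle (hence a finite Blaschke product), and then use the Maslov index constraint to pin down the zero structure. A few points would need tightening if you wanted a self-contained argument. For Part~(1), a smooth disk lifts directly to $\C^{m'}$ because $\bD$ is simply connected and $G$ is a torus---no branched cover is needed---and the phrase ``use the $G$-freedom to kill the winding numbers associated to extra vectors'' is imprecise, since the $G$-action is fixed by the fan sequence~\eqref{kexact4} and the extra coordinates $Z_m,\dots,Z_{m'-1}$ correspond to divisor classes that are already trivial in $H^2(\bX)$. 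For Part~(2), the equivalence $\mu^{\rm de}(\beta)=0\Leftrightarrow \mu_{\rm CW}(\beta)=2\operatorname{age}(\nu)$ and the exact vanishing orders $m_1 t_i$ at the orbifold point rely on the explicit local model and index formula in \cite[Sections~3 and~5]{CP}; the present paper only cites those formulas and does not derive them, so your ``main obstacle'' is real and is precisely what \cite{CP} supplies.
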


\begin{lem}[{\cite[Lemma~9.1]{CP}}]\label{lem:basic_disk_classes}
For $\bX$ and $L$ as above, the relative homotopy group $\pi_2(\bX,L)$ is generated by the classes $\beta_i$ for $i=0,\dots,m-1$ together with $\beta_\nu$ for $\nu\in{\rm Box}'(\Sigma)$.
\end{lem}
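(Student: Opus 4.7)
My plan is to deduce the generation statement from the long exact sequence of orbifold homotopy groups for the pair $(\bX,L)$. Since $L$ is a Lagrangian torus fiber inside the smooth locus of $\bX$, we have $\pi_2(L)=0$ and $\pi_1(L)\cong N$. The quotient presentation $\bX=[U_\mathcal{A}/G]$ with $U_\mathcal{A}$ simply connected (its complement in $\C^{m'}$ has complex codimension at least $2$) and $G$ connected shows that $\bX$ is simply connected as an orbifold, so $\pi_1^{\mathrm{orb}}(\bX)=0$. The sequence therefore collapses to
\begin{equation*}
0 \longrightarrow \pi_2^{\mathrm{orb}}(\bX) \longrightarrow \pi_2(\bX,L) \stackrel{\partial}{\longrightarrow} N \longrightarrow 0.
\end{equation*}

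The first step is to identify $\partial$ on the basic classes. Writing $\bb_i=c_iv_i$ with $v_i\in N$ primitive, $\beta_i$ is modelled locally as a $c_i$-fold branched cover of a standard disc in the coarse moduli space, giving $\partial\beta_i=v_i$. The orbi-disk $\beta_\nu$ has a single interior orbifold marked point of type $\nu$; its local model from \cite[Section~6]{CP} yields $\partial\beta_\nu=\nu\in N$. By the combinatorial description of ${\rm Box}(\Sigma)$ in Section~\ref{subsec:tw_sec}, the set $\{v_i\}_{i=0}^{m-1}\cup{\rm Box}'(\Sigma)$ generates $N$ as a $\Z$-module, since for every cone $\sigma\in\Sigma$ the classes ${\rm Box}_{\bb_\sigma}$ run through all cosets of $N_{\bb_\sigma}$ in $N\cap\mathrm{Span}_\R\bb_\sigma$, while the $v_i$'s together with the box elements span the relevant ray lattices. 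Thus $\partial$ is already surjective on the subgroup generated by the basic classes.

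The second step is to show that $\pi_2^{\mathrm{orb}}(\bX)$ itself is spanned by integer combinations of basic classes lying in $\ker\partial$. Using simple-connectedness of $\bX$ together with an orbifold Hurewicz theorem, $\pi_2^{\mathrm{orb}}(\bX)$ is identified with the group of orbifold curve classes. A torus-invariant orbifold rational curve $C\subset\bX$ can be split along an equator isotopic to a loop in $L$ into two orbi-discs whose classes are integer combinations of $\beta_i$'s and $\beta_\nu$'s with equal and opposite boundaries in $N$; the classification of orbi-disks in \cite[Theorem~6.2]{CP} guarantees that each half admits such a basic decomposition. Summing the two halves exhibits the sphere class as the desired combination of basic classes in $\ker\partial$, which together with surjectivity of $\partial$ on the basic classes proves the lemma.

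The main technical obstacle is the equatorial decomposition of orbifold rational curves: one must match the local twisted-sector data at each pole of $C$ with the type of a basic orbi-disc $\beta_\nu$ (or confirm that a smooth pole corresponds to some $\beta_i$). Once this compatibility is established via \cite[Theorem~6.2]{CP}, the remainder of the argument reduces to bookkeeping of intersection numbers with the toric divisors $D_i$ and of the box data along the equator.
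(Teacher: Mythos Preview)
The paper does not give its own proof of this lemma: it is simply quoted from \cite[Lemma~9.1]{CP} and stated without argument, so there is nothing in the present paper to compare your proposal against.

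For what it is worth, your outline is essentially the strategy used in \cite{CP}: run the long exact sequence of the pair $(\bX,L)$, compute the boundary map on the basic classes, and then decompose torus-invariant orbifold spheres into two orbi-disks glued along an equator in $L$. Two small corrections. First, $\partial\beta_i=\bb_i$, the stacky vector, not the primitive $v_i$; this is consistent with the sentence in Section~\ref{sec:orbidisk_and_moduli} about $\partial\beta$ lying in the sublattice generated by $\bb_0,\dots,\bb_{m-1}$. The surjectivity onto $N$ still follows, since over each maximal cone $\sigma$ the set ${\rm Box}_{\bb_\sigma}$ gives a complete set of coset representatives for $N_{\bb_\sigma}$ in $N\cap\mathrm{Span}_\R\bb_\sigma$. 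Second, in your sphere step you are implicitly using that $H_2$ of a simplicial projective toric variety is generated by torus-invariant curve classes; the orbifold Hurewicz map alone does not supply this, so it should be invoked explicitly before the equatorial splitting.
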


As in \cite{CP}, these generators of $\pi_2(\bX,L)$ are called {\em basic disk classes}. They are the analogue of Maslov index $2$ disk classes in toric manifolds.

Let
\[ \CM^{\rm op, main}_{k+1,l}(\bX, L,\beta,\bx)\]
be the moduli space of good representable stable maps from bordered orbifold Riemann surfaces of genus zero with $k+1$ boundary marked points $z_0,z_1,\dots,z_k$ and $l$ interior (orbifold) marked points $z_1^+,\dots,z_l^+$ in the homotopy class $\beta$ of type $\bx = (\mathcal{X}_{\nu_1},\dots, \mathcal{X}_{\nu_l})$. The superscript ``$main$" is meant to indicate the connected component on which the boundary marked points respect the cyclic order of $S^1=\partial D^2$.
According to \cite[Lemma~2.5]{CP}, $\CM^{\rm op, main}_{k+1,l}(\bX, L,\beta,\bx)$ has real virtual dimension
\begin{gather*}
n + \mu_{\rm CW}(\beta) + k + 1 + 2l -3 - 2\sum_{j=1}^l\operatorname{age}(\nu_j).
\end{gather*}

By \cite[Proposition~9.4]{CP}, if $\CM^{\rm op, main}_{1,1}(\bX, L,\beta)$ is non-empty and if $\partial\beta$ is not in the sublattice generated by $\bb_0, \dots, \bb_{m-1}$, then there exist $\nu\in {\rm Box}'(\Sigma)$, $k_0,\dots ,k_{m-1}\in\N$ and $\alpha\in H_2^{\rm eff}(\bX)$ such that
$\beta = \beta_\nu + \sum\limits_{i=0}^{m-1} k_i \beta_i + \alpha,$
where $\alpha$ is realized by a union of holomorphic (orbi-)spheres. The CW Maslov index of $\beta$ written in this way is given by
$\mu_{\rm CW}(\beta) = 2\operatorname{age}(\nu) + 2 \sum\limits_{i=0}^{m-1} k_i + 2 c_1(\bX)\cdot\alpha$.

\subsubsection{Orbi-disk invariants}\label{subsec:disk_inv}
Pick twisted sectors $\bX_{\nu_1}, \dots, \bX_{\nu_l}$ of the toric orbifold $\bX$. Consider the moduli space \begin{gather*}\CM^{\rm op, main}_{1,l}(\bX, L,\beta,\bx)\end{gather*} of good representable stable maps from bordered orbifold Riemann surfaces of genus zero with one boundary marked point and $l$ interior orbifold marked points of type $\bx = (\bX_{\nu_1},\dots, \bX_{\nu_l})$ representing the class $\beta\in\pi_2(\bX, L)$. According to \cite{CP}, $\CM^{\rm op, main}_{1,l}(\bX, L,\beta,\bx)$ can be equipped with a virtual fundamental chain, which has an expected dimension $n$ if the following equality holds:
\begin{gather}\label{critmaslov}
\mu_{\rm CW}(\beta) = 2 + \sum_{j=1}^l (2\cdot\operatorname{age}(\nu_j)-2).
\end{gather}

Throughout the paper, we make the following assumptions.

\begin{assumption}We assume that the toric orbifold $\bX$ is semi-Fano $($see Definition~{\rm \ref{defn:sF})} and {\em Gorenstein}.\footnote{This means that $K_\bX$ is Cartier.}	Moreover, we assume that the type $\bx$ consists of twisted sectors with $\operatorname{age}\leq 1$.\footnote{This assumption does not impose any restriction in the construction of the SYZ mirror over $H^{\leq 2}_{\mathrm{CR}}(\bX)$. We do not discuss mirror construction in this paper.}
\end{assumption}

Then the age of every twisted sector of $\bX$ is a non-negative integer. Since a basic orbi-disk class $\beta_\nu$ has Maslov index $2 \operatorname{age}(\nu)$, we see that every non-constant stable disk class has at least Maslov index $2$.

Moreover, the virtual fundamental chain $\big[\CM^{\rm op, main}_{1,l}(\bX, L,\beta,\bx)\big]^{\rm vir}$ has expected dimension $n$ when $\mu_{\rm CW}(\beta) = 2$, and in fact we get a virtual fundamental {\em cycle} because $\beta$ attains the minimal Maslov index, thus preventing disk bubbling to occur. Therefore the following definition of {\em genus~$0$ open orbifold GW invariants} (also known as {\em orbi-disk invariants}) is independent of the choice of perturbations of the Kuranishi structures:\footnote{In the general case one may restrict to torus-equivariant perturbations, as did in \cite{FOOO1, FOOO2, FOOO10b}.}

\begin{defn}[orbi-disk invariants]\label{defn:orbi-disk_inv}
Let $\beta\in\pi_2(\bX,L)$ be a relative homotopy class with Maslov index given by \eqref{critmaslov}. Suppose that the moduli space $\CM^{\rm op, main}_{1,l}(\bX, L,\beta,\bx)$ has a virtual fundamental cycle of dimension $n$. Then we define
\begin{gather*}n_{1,l,\beta}^\bX([{\rm pt}]_{L};\mathbf{1}_{\nu_1},\dots,\mathbf{1}_{\nu_l}):=
\operatorname{ev}_{0*}\big(\big[\CM^{\rm op, main}_{1,l}(\bX, L,\beta,\bx)\big]^{\rm vir}\big)\in H_n(L;\rat)\cong\rat,\end{gather*}
where $\operatorname{ev}_0\colon \CM^{\rm op, main}_{1,l}(\bX, L,\beta,\bx)\to L$ is the evaluation map at the boundary marked point, $[{\rm pt}]_{L} \in H^n(L;\rat)$ is the point class of $L$, and $\mathbf{1}_{\nu_j} \in H^0(\bX_{\nu_j};\rat)\subset H^{2\operatorname{age}(\nu_j)}_{\mathrm{CR}}(\bX;\rat)$ is the fundamental class of the twisted sector $\bX_{\nu_j}$.
\end{defn}

\begin{numrmk}The Kuranishi structures in this paper are the same as those defined in \cite{FOOO1, FOOO2}, incorporating the works \cite{CR01,CR04} for the interior orbifold marked points. This has been explained in \cite[Section~10]{CP}. We also refer the readers to \cite[Appendix]{FOOO_book} and \cite{FOOO12} for the detailed construction, and to \cite{McDuff-Wehrheim12} (and its forthcoming sequels) for a different approach.

The moduli spaces considered here are in fact much simpler than those in \cite{FOOO1, FOOO2} (and \cite{FOOO_book}) because we only need to consider stable disks with just one disk component which is minimal, and hence disk bubbling does not occur. In particular, we do not have codimension-one boundary components, and hence the above definition is independent of choices of Kuranishi perturbations.
\end{numrmk}

For a basic (orbi-)disk with at most one interior orbifold marked point, the corresponding moduli space $\CM^{\rm op, main}_{1,0}(\bX, L,\beta_i)$ (or $\CM^{\rm op, main}_{1,1}(\bX, L,\beta_\nu,\nu)$ when $\beta_\nu$ is a basic orbi-disk class) is regular and can be identified with $L$. Thus the associated invariants are evaluated as follows~\cite{CP}:
\begin{enumerate}\itemsep=0pt
\item For $\nu \in {\rm Box}'$, we have $n_{1,1,\beta_\nu}^\bX([{\rm pt}]_{L};\mathbf{1}_\nu) = 1.$
\item For $i\in \{0,\dots,m-1\}$, we have $n_{1,0,\beta_i}^\bX([{\rm pt}]_{L}) = 1.$
\end{enumerate}
When there are more interior orbifold marked points or when the disk class is not basic, the corresponding moduli space is in general non-regular and virtual theory is involved in the definition, making the invariant much more difficult to compute.

\section{Geometric constructions}
Let $\beta\in \pi_2(\bX, L)$ be a disk class with $\mu_{\rm CW}(\beta)=2$. By the discussion in Section \ref{sec:review_disk_inv}, we can write \begin{gather*}\beta=\beta_{\mathbf d}+\alpha\end{gather*}
with $\alpha\in H_2(\bX, \mathbb{Z})$, $c_1(\bX)\cdot \alpha=0$ and either $\beta_{\mathbf d}\in \{\beta_0,\dots ,\beta_{m-1}\}$ or $\beta_{\mathbf d}\in \text{Box}'(\bX)$ is of age $1$. Denote by $\bb_d\in N$ the element corresponding to $\beta_{\mathbf d}$.

Recall that the fan polytope $\mathcal{P}\subset N_\mathbb{R}$ is the convex hull of the vectors $\bb_0,\dots , \bb_{m-1}$. Note that $\bb_d\in \mathcal{P}$. Denote by $F(\bb_d)$ the minimal face of the fan polytope $\mathcal{P}$ that contains the vector~$\bb_d$. Let~$F$ be a facet of $\mathcal{P}$ that contains $F(\bb_d)$. Let $\Sigma_{\beta_{\mathbf d}}\subset \Sigma$ be the minimal convex subfan containing all $\{\bb_0,\dots , \bb_{m-1}\}\cap F$. The vectors \begin{gather}
\{\bb_0,\dots ,\bb_{m'-1}\}\cap \sum_{\bb_j\in \{\bb_0,\dots , \bb_{m-1}\}\cap F} \mathbb{Q}_{\geq 0}\bb_j
\label{eq:toricCY}
\end{gather} determine a fan map $\Z^p \to N$ (where $p$ is the number of vectors above). Let \begin{gather*}\bX_{\beta_{\mathbf d}}\subset \bX\end{gather*} be the associated toric suborbifold (of the same dimension $n$).

\begin{lem}
$\bX_{\beta_{\mathbf d}}$ is a toric Calabi--Yau orbifold.
\end{lem}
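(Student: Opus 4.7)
The plan is to exhibit an element $m_F \in M$ that evaluates to $1$ on every stacky vector of $\bX_{\beta_{\mathbf d}}$, which is the standard toric Calabi--Yau criterion: it exhibits $\sum_i D_i^{\bX_{\beta_{\mathbf d}}}$ as $\operatorname{div}(\chi^{m_F})$, hence trivializing the canonical divisor.

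First I would extract from the Gorenstein and semi-Fano assumptions on $\bX$ the existence of a single defining functional attached to each facet of the fan polytope $\mathcal{P}$. The Gorenstein condition supplies, for every top-dimensional cone $\sigma \in \Sigma$ with stacky generators $\bb_{i_1},\dots,\bb_{i_n}$, a unique $m_\sigma \in M$ with $\langle m_\sigma,\bb_{i_k}\rangle=1$ for all $k$. The semi-Fano condition (nef $-K_\bX$) is equivalent to the piecewise linear function $\varphi$ on $|\Sigma|$ defined by $\varphi|_\sigma = \langle m_\sigma,-\rangle$ being convex, so $\varphi = \max_\sigma \langle m_\sigma,-\rangle$ and in particular $\langle m_\sigma, \bb_i\rangle \le 1$ for every stacky $\bb_i$. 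Consequently, any two top-dimensional cones of $\Sigma$ whose stacky generators all lie on a common facet $F$ of $\mathcal{P}$ must share a single defining functional $m_\sigma = m_F$, and $\langle m_F,\bb_i\rangle = 1$ holds exactly when $\bb_i \in F$.

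Second I would verify that $\bb_d$ lies on some facet of $\mathcal{P}$. If $\beta_{\mathbf d}=\beta_i$ this is immediate since $\bb_d=\bb_i$ is a vertex of $\mathcal{P}$. If $\beta_{\mathbf d}=\beta_\nu$ with $\nu = \sum_k t_k \bb_{i_k}\in {\rm Box}'(\Sigma)$ of age $1$, then, picking a top-dimensional cone $\sigma$ containing the $\bb_{i_k}$'s, $\langle m_\sigma,\nu\rangle = \sum_k t_k = 1$, while $\langle m_\sigma,-\rangle \le 1$ on $\mathcal{P}$ by the previous step, so $\nu$ lies on the facet $F$ of $\mathcal{P}$ cut out by $m_\sigma = 1$. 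In either case we may take this $F$ as the facet of $\mathcal{P}$ containing $F(\bb_d)$ in the construction of $\Sigma_{\beta_{\mathbf d}}$. By definition the stacky vectors of $\bX_{\beta_{\mathbf d}}$ are precisely $\{\bb_0,\dots,\bb_{m-1}\}\cap F$, and each satisfies $\langle m_F,\bb_i\rangle = 1$; this is the Calabi--Yau condition.

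The main obstacle I expect is the careful bookkeeping in the genuinely nef (as opposed to strictly convex) semi-Fano setting: a facet of $\mathcal{P}$ may correspond to a union of several top-dimensional cones of $\Sigma$ rather than a single one, so one must confirm that the minimal convex subfan $\Sigma_{\beta_{\mathbf d}}$ really coincides with the union of cones generated by subsets of $\{\bb_0,\dots,\bb_{m-1}\}\cap F$ and that all such cones carry the functional $m_F$. Once that is pinned down, the extra vectors appearing in (\ref{eq:toricCY}) are irrelevant to the Calabi--Yau conclusion, since the canonical divisor of a toric orbifold depends only on its stacky vectors.
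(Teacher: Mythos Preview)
Your proposal is correct and follows essentially the same approach as the paper: both arguments exhibit an integral linear functional on $N$ taking the value~$1$ on every stacky vector lying in the facet~$F$, which is the toric Calabi--Yau criterion. The paper's proof is considerably terser---it simply asserts that the hyperplane through~$F$ has a primitive integral defining covector because~$\bX$ is Gorenstein---whereas you unpack this by explicitly invoking the semi-Fano convexity to pin down a single $m_F = m_\sigma \in M$ common to all top cones supported on~$F$, and you additionally verify (via the age-$1$ condition) that $\bb_d$ indeed lies on a facet, a point the paper leaves implicit.
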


\begin{proof}
	All the generators in \eqref{eq:toricCY} lie in the hyperplane containing $F$. Since $\bX$ is Gorenstein, this hyperplane has a defining equation $\nu=0$ for some primitive vector $\nu \in M$. Hence $\bX_{\beta_{\mathbf d}}$ is toric Calabi--Yau.
\end{proof}

\begin{Example}Consider $\bP^2/\Z_3$, whose fan is shown in the left of Fig.~\ref{fig:P2mod3}. If $\beta_{\mathbf d}$ corresponds to the vector $(1,0)$ (which is marked as `113' in the figure), then $\Sigma_{\beta_{\mathbf d}}$ is the cone spanned by $v_2 = (2,-1)$ and $v_3 = (-1,2)$. If $\beta_{\mathbf d}$ corresponds to the vector $v_3$, then $\Sigma_{\beta_{\mathbf d}}$ can be taken to be the cone spanned by $v_2$, $v_3$, or the cone spanned by $v_1$, $v_3$. In both cases, the corresponding toric Calabi--Yau orbifold is $\C^2/\Z_3$.
\end{Example}

Note that $\bX_{\beta_{\mathbf d}}$ depends on the choice of the face $F$, not just $\beta_{\mathbf d}$. We use $\bX_{\beta_{\mathbf d}}$ to compute open Gromov--Witten invariants of $\bX$ in class $\beta=\beta_{\mathbf d}+\alpha$.

In what follows we show that $\bX_{\beta_{\mathbf d}}\subset \bX$ contains all stable orbi-disks of $\bX$ of class $\beta$. First, we have the following analogue of \cite[Proposition 5.6]{cllt}.
\begin{lem}\label{lem:in_div}
Let $f\colon \mathcal{D}\cup \mathcal{C}\to \bX$ be a stable orbi-disk map in the class $\beta=\beta_{\mathbf d}+\alpha$, where $\mathcal{D}$ is a~$($possibly orbifold$)$ disk and $\mathcal{C}$ is a~$($possibly orbifold$)$ rational curve such that $f_*[\mathcal{D}]=\beta_{\mathbf d}$ and $f_*[\mathcal{C}]=\alpha$ with $c_1(\alpha)=0$. Then we have \begin{gather*}f(\mathcal{C})\subset \bigcup_{\bb_j\in F(\bb_d)} D_j,\end{gather*} and $[f(\mathcal{C})]\cdot D_j=0$ whenever $\bb_j\notin F(\bb_d)$.
\end{lem}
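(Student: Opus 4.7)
The plan is to identify, for every irreducible component $\mathcal{C}_k$ of $\mathcal{C}$, a toric divisor $D_j$ with $\bb_j\in F(\bb_d)$ containing it; the containment statement and the intersection statement will then both follow. The argument proceeds in two stages: first, showing that each component has zero anticanonical degree and lies in the toric boundary; second, using open-stratum considerations at the nodes to locate the containing divisor inside $F(\bb_d)$.

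For the first stage, I would decompose $\mathcal{C}=\bigcup_k\mathcal{C}_k$ and set $\alpha_k:=f_*[\mathcal{C}_k]$. Since $-K_\bX$ is nef by semi-Fano and each $\alpha_k$ is effective, $c_1(\bX)\cdot\alpha_k\geq 0$; combined with $c_1(\bX)\cdot\alpha=\sum_k c_1(\bX)\cdot\alpha_k=0$, this forces $c_1(\bX)\cdot\alpha_k=0$ for every $k$. Moreover, since $\sum_i D_i\cdot\alpha_k=0$ with each term non-negative whenever $f(\mathcal{C}_k)\not\subset D_i$, and since the open torus orbit $(\C^\times)^n$ contains no compact curve, each $f(\mathcal{C}_k)$ must be contained in $D_{j_k}$ for at least one stacky vector $\bb_{j_k}$.

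For the second stage, I would use connectedness of $\mathcal{D}\cup\mathcal{C}$ together with the classification of basic (orbi-)disks in \cite{CP}. The disk $\mathcal{D}$ represents $\beta_{\mathbf d}$; its image meets the toric boundary exactly at the disk's center, which sits in the open stratum of $V(\sigma_0)=\bigcap_{\bb_j\in F(\bb_d)}D_j$, where $\sigma_0\in\Sigma$ is the minimal cone whose relative interior contains $\bb_d$. Since every non-constant component of $\mathcal{C}$ sits in the toric boundary and must meet $\mathcal{D}$, the node is forced onto this center; in particular it lies in $D_j$ for every $\bb_j\in F(\bb_d)$ and in no $D_{j'}$ with $\bb_{j'}\notin F(\bb_d)$. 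Hence the component of $\mathcal{C}$ adjacent to $\mathcal{D}$ is contained in some $D_j$ with $\bb_j\in F(\bb_d)$. Iterating this open-stratum argument along the dual graph of $\mathcal{C}$ yields $f(\mathcal{C})\subset\bigcup_{\bb_j\in F(\bb_d)}D_j$.

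The intersection statement $[f(\mathcal{C})]\cdot D_j=0$ for $\bb_j\notin F(\bb_d)$ follows: once $f(\mathcal{C})\subset\bigcup_{\bb_i\in F(\bb_d)}D_i$, no component $\mathcal{C}_k$ is contained in such a $D_j$, so $D_j\cdot\alpha_k\geq 0$; combined with $c_1(\bX)\cdot\alpha_k=\sum_i D_i\cdot\alpha_k=0$ and similar non-negativity for every $\bb_{i'}\notin F(\bb_d)$, these non-negative terms must all vanish. The main obstacle is the iteration along the dual graph: one needs to verify that the open-stratum constraint propagates through each successive node, which requires the semi-Fano Gorenstein support-function argument (used in the preceding lemma) and the explicit structure of orbi-disks from \cite{CP} to rule out a node landing in a deeper stratum meeting some $D_{j'}$ with $\bb_{j'}\notin F(\bb_d)$.
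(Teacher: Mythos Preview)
Your overall strategy---decompose $\mathcal{C}$ into components, observe each has $c_1=0$ and lies in the toric boundary, then propagate from the disk along the dual graph---matches the paper's. Two points need correction. First, a minor one: the center of the basic (orbi-)disk lies in the open stratum of $V(\sigma_0)=\bigcap_{\bb_j\text{ a ray of }\sigma_0}D_j$, not $\bigcap_{\bb_j\in F(\bb_d)}D_j$; the face $F(\bb_d)$ can contain many more stacky vectors than the rays of $\sigma_0$, and the center need not lie on their divisors. This does not damage the first propagation step, since the rays of $\sigma_0$ do lie in $F(\bb_d)$.

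The more serious gap is in your separate deduction of the intersection statement. Containment $f(\mathcal{C})\subset\bigcup_{\bb_i\in F(\bb_d)}D_i$ does \emph{not} rule out that some component $\mathcal{C}_k$ also sits inside a $D_j$ with $\bb_j\notin F(\bb_d)$ (it could lie in $D_i\cap D_j$). And even granting non-negativity of $D_j\cdot\alpha_k$ for all $\bb_j\notin F(\bb_d)$, you cannot conclude these vanish from $\sum_i D_i\cdot\alpha_k=0$, because the terms with $\bb_i\in F(\bb_d)$ can be negative (indeed $\mathcal{C}_k$ \emph{is} contained in such a $D_i$). The paper does not separate the two conclusions: it invokes \cite[Lemma~4.5]{gi} (the support-function lemma you allude to) at each step of the induction, which for a $c_1=0$ curve in $D_i$ with non-negative intersection with the other divisors yields $D_j\cdot[\mathcal{C}_k]=0$ for all $\bb_j\notin F(\bb_i)$ directly. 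That vanishing is then what lets you control the location of the next node, so the containment and the intersection-zero statements are proved together, component by component, following \cite[Proposition~5.6]{cllt}. Your instinct that the support-function argument is the crux is correct; it is not merely an auxiliary check for the iteration but the mechanism that simultaneously delivers both conclusions.
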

\begin{proof}
Since $c_1(\alpha)=0$, $f(\mathcal{C})$ should lie in toric divisors of $\bX$. Recall that $\beta_{\mathbf d}$ achieves the minimal Maslov index $2$, and hence there is no disc bubbling.

Suppose $\beta_{\mathbf d}$ is a smooth disk class. Then each sphere component $\mathcal{C}_0$ meeting the disk component $\mathcal{D}$ maps into the divisor $D_{\mathbf d}$ and it should have non-negative intersection with other toric divisors. By \cite[Lemma 4.5]{gi} which easily extends to the simplicial setting, we have the desired statement for $f(\mathcal{C}_0)$.

If $\beta_{\mathbf d}$ is an orbi-disk class, then we can write the corresponding $\bb_d\in N$ as $\bb_d=\sum\limits_{\bb_i\in \sigma}c_i \bb_i$, with $\sum_i c_i=1, c_i\in [0, 1)\cap \mathbb{Q}$. For a sphere component $\mathcal{C}_0$ meeting the disk component $\mathcal{D}$, we have $f(\mathcal{C}_0)\subset \bigcup\limits_{\bb_i\in \sigma} D_i$ and each $\bb_i\in \sigma$ satisfies $\bb_i\in F(\bb_d)$. Hence $f(\mathcal{C}_0)\subset \bigcup\limits_{\bb_i\in F(\bb_d)} D_i$ and $f(\mathcal{C}_0) \cdot D_j =0$ for $\bb_j \notin F(\bb_d)$.

Let $\mathcal{C}_1\subset \mathcal{C}$ be a sphere component meeting $\mathcal{C}_0$, then we have $f(\mathcal{C}_1) \subset F(\bb_j)$ for some $\bb_j \in F(\bb_d)$ by the intersection condition. Now, we can follow the proof of \cite[Proposition~5.6]{cllt} shows that $f(\mathcal{C}_1)\subset \bigcup\limits_{\bb_i\in F(\bb_d)} D_i$. The result follows by repeating this argument for one sphere component at a time.
\end{proof}

Partition $\{\bb_0,\dots , \bb_{m-1}\}\cap F(\bb_d)$ into the disjoint union of two subsets, \begin{gather*}\{\bb_0,\dots , \bb_{m-1}\}\cap F(\bb_d)=F(\bb_d)^c\,\coprod\, F(\bb_d)^{nc},\end{gather*}
where $\bb_i\in F(\bb_d)^c$ if $D_i\subset \bX_{\beta_{\mathbf d}}$ and $\bb_i\in F(\bb_d)^{nc}$ if $D_i\not\subset \bX_{\beta_{\mathbf d}}$.

\begin{lem}\label{lem:in_cy}
Let $f\colon \mathcal{D}\cup \mathcal{C}\to \bX$ be as in Lemma~{\rm \ref{lem:in_div}}. Then we have $f(\mathcal{D}\cup\mathcal{C})\subset \bX_{\beta_{\mathbf d}}$.
\end{lem}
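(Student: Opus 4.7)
My plan is to handle the disk part $\mathcal{D}$ and the rational part $\mathcal{C}$ separately, dealing with the latter by induction on the combinatorial distance from $\mathcal{D}$ in the dual tree. Since $\bX_{\beta_{\mathbf d}}\subset\bX$ has the same dimension as $\bX$ and is associated to the subfan $\Sigma_{\beta_{\mathbf d}}$, its complement in $\bX$ is exactly $\bigcup_{\bb_i\notin C_F}D_i$, where $C_F:=\sum_{\bb_j\in\{\bb_0,\dots,\bb_{m-1}\}\cap F}\mathbb{Q}_{\geq 0}\bb_j$ is the cone appearing in~(\ref{eq:toricCY}). Hence it suffices to prove that $f$ does not meet any $D_i$ with $\bb_i\notin C_F$.

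For the disk component I would appeal to the classification of basic (orbi-)disks in Corollaries~6.3--6.4 of~\cite{CP}. If $\beta_{\mathbf d}=\beta_d$ is a smooth basic disk class, then $f(\mathcal{D})$ only touches $D_d$, and $\bb_d\in F(\bb_d)\cap\{\bb_0,\dots,\bb_{m-1}\}\subset C_F$. If $\beta_{\mathbf d}=\beta_\nu$ is a basic orbi-disk class with $\bb_d=\sum_{\bb_i\in\sigma}c_i\bb_i$, then $f(\mathcal{D})$ passes through the stratum $D_\sigma$ at its unique interior orbifold marked point and otherwise lies in the open torus orbit of $\bX$; the only divisors met are $D_i$ with $\bb_i$ a generator of $\sigma$, all of which lie in $F(\bb_d)\cap\{\bb_0,\dots,\bb_{m-1}\}\subset C_F$. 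Thus in either case $f(\mathcal{D})\subset\bX_{\beta_{\mathbf d}}$.

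For the sphere part I would argue by strong induction on the combinatorial distance of a component $\mathcal{C}_0\subset\mathcal{C}$ from $\mathcal{D}$ in the dual tree. The inductive hypothesis guarantees that the node attaching $\mathcal{C}_0$ to the preceding component has image in $\bX_{\beta_{\mathbf d}}$ and, in particular, sits outside every $D_i$ with $\bb_i\notin C_F$. Lemma~\ref{lem:in_div} already furnishes $f(\mathcal{C}_0)\subset D_{j_0}$ for some $\bb_{j_0}\in F(\bb_d)\subset F$ and the total intersection $[f(\mathcal{C})]\cdot D_i=0$ for every $\bb_i\notin F(\bb_d)$, which a fortiori holds whenever $\bb_i\notin C_F$. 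Combining this total vanishing with the component-wise non-negativity of $[f(\mathcal{C}_\ell)]\cdot D_i$ (valid whenever $f(\mathcal{C}_\ell)\not\subset D_i$) forces each individual $[f(\mathcal{C}_\ell)]\cdot D_i$ to vanish for $\bb_i\notin C_F$, ruling out isolated intersections of $f(\mathcal{C}_0)$ with any such $D_i$; the alternative $f(\mathcal{C}_0)\subset D_i$ would drag the attaching node into $D_i\cap D_{j_0}$, contradicting the inductive hypothesis. This completes the induction and proves $f(\mathcal{D}\cup\mathcal{C})\subset\bX_{\beta_{\mathbf d}}$.

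The hard part will be justifying the component-wise vanishing when many spheres are nested in deep toric strata, and in particular excluding the possibility that some component lies in a stratum $D_i\cap D_{j_0}$ with $\bb_i\notin C_F$ without directly feeding into the attaching node. I expect this to mirror the combinatorial analysis in the proof of~\cite[Proposition~5.6]{cllt}, using crucially the Gorenstein condition (which places $F$ on a Calabi--Yau hyperplane and supplies the requisite positivity) together with the minimality of the Maslov index of $\beta_{\mathbf d}$, which precludes any extra area being spent on excursions outside $\bX_{\beta_{\mathbf d}}$.
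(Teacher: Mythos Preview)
Your tree induction has a genuine circularity at precisely the step you flag. To conclude that the component $\mathcal{C}_0$ at distance $k$ avoids $D_i$ for $\bb_i\notin C_F$, you want $[f(\mathcal{C}_0)]\cdot D_i=0$, which you try to extract from $\sum_\ell[f(\mathcal{C}_\ell)]\cdot D_i=0$ together with $[f(\mathcal{C}_\ell)]\cdot D_i\geq 0$ for all $\ell$. But the non-negativity for a component $\mathcal{C}_\ell$ at distance $>k$ needs $f(\mathcal{C}_\ell)\not\subset D_i$, and that is exactly what the induction has not yet produced. Your node argument rules out $f(\mathcal{C}_0)\subset D_i$ for the \emph{current} component, but it says nothing about the future components contributing to the same sum; a deeply nested $\mathcal{C}_\ell\subset D_i\cap D_{j_0}$ with large negative self-intersection can compensate for positive intersections closer to $\mathcal{D}$, and nothing in the hypothesis prevents that. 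So the induction cannot close as stated, and the last paragraph is not a loose end but the whole proof.

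The paper sidesteps this by abandoning the tree entirely. It splits $\mathcal{C}=\mathcal{C}_c\cup\mathcal{C}_{nc}$ according to whether a component already lies in $\bigcup_{\bb_j\in F(\bb_d)^c}D_j$, and shows the \emph{class} $B:=f_*[\mathcal{C}_{nc}]$ vanishes. The semi-Fano condition (not the Gorenstein one you invoke) gives $-K_\bX\cdot B=0$; writing $B=\sum_k c_kB_k$ with $B_k$ torus-invariant $1$-cycles (each corresponding to an $(n{-}1)$-cone $\sigma_k$) and using nefness again forces $-K_\bX\cdot B_k=0$ for every $k$. Then \cite[Lemma~4.5]{gi} yields $D\cdot B_k=0$ for every toric prime $D$ with ray outside the minimal face $F(\sigma_k)$ of $\mathcal{P}$; because some $B_k$ is not contained in $\bigcup_{\bb_j\in F(\bb_d)^c}D_j$, these ``outside'' divisors already span $H^2(\bX)$, whence $B_k=0$, a contradiction. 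This numerical argument on invariant curves replaces your component-by-component analysis and is where the work actually happens.
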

\begin{proof}Certainly $f(\mathcal{D})\subset \bX_{\beta_{\mathbf d}}$. We claim that{\samepage
\begin{gather}\label{eqn:in_cy_div}
 f(\mathcal{C})\subset \bigcup_{\bb_j\in F(\bb_d)^c} D_j,
\end{gather}
from which the lemma follows.}

To see~(\ref{eqn:in_cy_div}), we write $\mathcal{C}=\mathcal{C}_c\cup\mathcal{C}_{nc}$ where $\mathcal{C}_c$ consists of components of $\mathcal{C}$ which lie in $\bigcup\limits_{\bb_j\in F(\bb_d)^c} D_j$, and $\mathcal{C}_{nc}$ consists of the remaining components. Set $A:=f_*[\mathcal{C}_c]$ and $B:=f_*[\mathcal{C}_{nc}]$. Then $\alpha=A+B$. Since $-K_\bX$ is nef and $-K_\bX\cdot \alpha=0$, we have $-K_\bX\cdot A=0=-K_\bX\cdot B$. Write $B=\sum_k c_k B_k$ as an effective linear combination of the classes $B_k$ of irreducible $1$-dimensional torus-invariant orbits in $\bX$. Again because $-K_\bX$ is nef, we have $-K_\bX\cdot B_k=0$ for all $k$. Each~$B_k$ corresponds to an $(n-1)$-dimensional cone $\sigma_k\in \Sigma$. In the expression $B=\sum_k c_k B_k$, there is at least one (non-zero) $B_k$ which is not contained in $\bigcup\limits_{\bb_j\in F(\bb_d)^c} D_j$. As a consequence, either~$\sigma_k$ contains a ray $\mathbb{R}_{\geq 0} \bb_j$ with $\bb_j\notin F(\bb_d)$, or there exists a $\bb_j\notin F(\bb_d)$ such that $\sigma_k$ and $\bb_j$ span an $n$-dimensional cone in~$\Sigma$.

Since $B_k$ is not contained in $\bigcup\limits_{\bb_j\in F(\bb_d)^c} D_j$, we see that if $\bb_i\in F(\bb_d)^c$ then $\bb_i\notin \sigma_k$. Also, $D\cdot B_k\geq 0$ for every toric prime divisor $D\subset \bX$ not corresponding to a ray in $\sigma_k$.

By \cite[Lemma~4.5]{gi} (which easily extends to the simplical setting), we have $D\cdot B_k=0$ for every toric prime divisor $D\subset \bX$ corresponding to an element in $\{\bb_1,\dots,\bb_m\}\setminus F(\sigma_k)$, where $F(\sigma_k)\subset \mathcal{P}$ is the minimal face of $\mathcal{P}$ containing rays in $\sigma_k$. Since the divisors $D\subset \bX$ corresponding to $\{\bb_1,\dots,\bb_m\}\setminus F(\sigma_k)$ span $H^2(\bX)$, we have $B_k=0$, a contradiction.
\end{proof}

Let $\bx = (\mathcal{X}_{\nu_1},\dots, \mathcal{X}_{\nu_l})$ be an $l$-tuple of twisted sectors of $\bX_{\beta_{\mathbf d}}$. Then Lemma~\ref{lem:in_cy} implies that the natural inclusion $\CM^{\rm op, main}_{1,l}(\bX_{\beta_{\mathbf d}}, L,\beta,\bx)\hookrightarrow \CM^{\rm op, main}_{1,l}(\bX, L,\beta,\bx)$ is a bijection. Since $\bX_{\beta_{\mathbf d}}\subset \bX$ is open, the local deformations and obstructions of stable discs in $\bX_{\beta_{\mathbf d}}$ and their inclusion in $\bX$ are isomorphic. It follows that

\begin{prop}\label{prop:op=cl}
The moduli spaces $\CM^{\rm op, main}_{1,l}(\bX, L,\beta,\bx)$ of disks in $\bX$ is isomorphic as Kuranishi spaces to the moduli spaces $\CM^{\rm op, main}_{1,l}(\bX_{\beta_{\mathbf d}}, L,\beta,\bx)$ of disks in $\bX_{\beta_{\mathbf d}}$. Consequently
\begin{gather*}
n_{1,l,\beta}^\bX([{\rm pt}]_{L};\mathbf{1}_{\nu_1},\dots,\mathbf{1}_{\nu_l})= n_{1,l,\beta}^{\bX_{\beta_{\mathbf d}}}([{\rm pt}]_{L};\mathbf{1}_{\nu_1},\dots,\mathbf{1}_{\nu_l}).
\end{gather*}
\end{prop}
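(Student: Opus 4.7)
The plan is to upgrade the set-theoretic identification outlined in the paragraph preceding the proposition into an isomorphism of Kuranishi spaces, and then read off the invariant equality from compatibility of the boundary evaluation maps.

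First I would verify the bijection on underlying sets. By Lemma~\ref{lem:in_cy}, any stable orbi-disk $f\colon (\mathcal{D}\cup\mathcal{C},\partial\mathcal{D}) \to (\bX,L)$ representing $\beta = \beta_{\mathbf d} + \alpha$ has image contained in $\bX_{\beta_{\mathbf d}}$. Because $\Sigma_{\beta_{\mathbf d}}$ is a subfan of $\Sigma$ in the same ambient lattice $N$, the inclusion $\bX_{\beta_{\mathbf d}}\hookrightarrow \bX$ is a representable open immersion of toric orbifolds of the same dimension, and the representable map $f$ factors uniquely through $\bX_{\beta_{\mathbf d}}$. The Lagrangian moment-map fiber $L$ sits in the dense torus orbit, which is contained in $\bX_{\beta_{\mathbf d}}$, so the boundary condition is preserved by the factorization. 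Composing with the inclusion is the obvious inverse, and both operations respect the type $\bx$, all marked points, and the local group data at interior orbifold marked points.

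Next I would promote this to a Kuranishi-space isomorphism. The openness of $\bX_{\beta_{\mathbf d}}\hookrightarrow \bX$ gives a canonical identification $f^*T\bX = f^*T\bX_{\beta_{\mathbf d}}$ (and likewise for their desingularizations at interior orbifold points), so the linearized Cauchy--Riemann operators and their obstruction spaces used to build Kuranishi charts in~\cite{CP} (following \cite{FOOO1,FOOO2,CR01,CR04}) agree on the nose at each stable map. This allows one to take identical Kuranishi neighborhoods, obstruction bundles, and transition maps on the two moduli spaces, yielding the desired isomorphism of Kuranishi spaces extending the set-theoretic bijection.

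The invariant equality then follows formally: under this isomorphism the boundary evaluation maps $\operatorname{ev}_0$ to the common Lagrangian $L$ coincide, and the virtual fundamental cycles are built from identical Kuranishi data, so their pushforwards to $H_n(L;\rat)\cong\rat$ agree. The only step with nontrivial geometric content is the confinement of all sphere components of $\mathcal{C}$ to $\bX_{\beta_{\mathbf d}}$, which is precisely the role of Lemma~\ref{lem:in_cy}; once that is in place the remainder is a formal consequence of the fact that $\bX_{\beta_{\mathbf d}}\subset\bX$ is an open suborbifold containing $L$.
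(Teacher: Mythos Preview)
Your proposal is correct and follows essentially the same approach as the paper: the paper's argument is contained in the paragraph immediately preceding the proposition, which uses Lemma~\ref{lem:in_cy} to obtain the set-theoretic bijection and then invokes openness of $\bX_{\beta_{\mathbf d}}\subset\bX$ to conclude that local deformations and obstructions agree. You have simply spelled out in more detail why the open immersion forces $f^*T\bX\cong f^*T\bX_{\beta_{\mathbf d}}$ and hence identical Kuranishi charts, and made explicit the compatibility of $\operatorname{ev}_0$ needed for the invariant equality; these elaborations are welcome but do not change the underlying strategy.
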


Since $\bX_{\beta_{\mathbf d}}$ is a toric Calabi--Yau orbifold, the open Gromov--Witten invariants $n_{1,l,\beta}^{\bX_{\beta_{\mathbf d}}}([{\rm pt}]_{L};\allowbreak \mathbf{1}_{\nu_1},\dots,\mathbf{1}_{\nu_l})$ have been computed in \cite{cclt}. By Proposition~\ref{prop:op=cl}, this gives open Gromov--Witten invariants of~$\bX$. Explicitly they are given as follows.

Using the toric data of ${\bX_{\beta_{\mathbf d}}}$, we define
\begin{gather}\Omega^{\bX_{\beta_{\mathbf d}}}_j := \{d\in \mathbb{K}_{\rm eff} \,|\, \nu(d)=0, \langle D_j,d\rangle \in \Z_{<0}\ \textrm{and} \nonumber\\
\hphantom{\Omega^{\bX_{\beta_{\mathbf d}}}_j := \{}{} \langle D_i,d\rangle \in \Z_{\geq 0}\ \forall\, i\neq j\}, \qquad j = 0, 1, \dots, m-1,\nonumber\\
 \Omega^{\bX_{\beta_{\mathbf d}}}_j:=\{d\in \mathbb{K}_{\rm eff} \,|\, \nu(d)=\bb_j\textrm{ and } \langle D_i,d\rangle \notin \Z_{<0}\ \forall\, i\}, \qquad j = m, m+1, \dots, m'-1,\nonumber\\
A^{\bX_{\beta_{\mathbf d}}}_j(y) := \sum_{d\in \Omega^{\bX_{\beta_{\mathbf d}}}_j}y^d \frac{(-1)^{-\langle D_j,d\rangle-1}(-\langle D_j,d\rangle-1)!}{\prod_{i\neq j}\langle D_i,d\rangle!}, \qquad j = 0, 1, \dots, m-1,\nonumber\\
A^{\bX_{\beta_{\mathbf d}}}_j(y) := \sum_{d\in \Omega^{\bX_{\beta_{\mathbf d}}}_j}y^d \prod_{i=0}^{m'-1}
\frac{\prod_{k=\lceil\langle D_i,d\rangle\rceil}^\infty(\langle D_i,d\rangle-k)}{\prod_{k=0}^\infty(\langle D_i,d\rangle-k)}, \qquad j = m, m+1, \dots, m'-1,\nonumber\\
\label{eqn:toric_mirror_map_X}
\log q_a = \log y_a + \sum_{j=0}^{m-1} Q_{ja}A^{\bX_{\beta_{\mathbf d}}}_j(y), \qquad a = 1,\dots,r',\\
\tau_{\bb_j} = A^{\bX_{\beta_{\mathbf d}}}_j(y), \qquad j=m,\dots,m'-1,\nonumber
\end{gather}

\begin{thm} \label{thm:formula}
If $\beta_{\mathbf{d}}=\beta_{i_0}$ is a basic smooth disk class corresponding to the ray generated by~$\bb_{i_0}$ for some $i_0\in \{0,1,\dots,m-1\}$, then we have
\begin{gather} \sum_{\alpha\in H_2^{\rm eff}(\bX)}\sum_{l\geq 0}\sum_{\nu_1,\dots,\nu_l\in {\rm Box}'(\Sigma_{\beta_{\mathbf d}})^{\operatorname{age}=1}}\frac{\prod\limits_{i=1}^l\tau_{\nu_i}}{l!} n^\bX_{1,l,\beta_{i_0}+\alpha}\left([{\rm pt}]_L; \prod_{i=1}^l \mathbf{1}_{\nu_i}\right)q^\alpha \nonumber\\
\qquad{} = \exp\big({-}A^{\bX_{\beta_{\mathbf d}}}_{i_0}(y(q,\tau))\big)\label{eqn:formula_for_generating_function_sm_disk}
\end{gather}
via the inverse $y = y(q,\tau)$ of the toric mirror map \eqref{eqn:toric_mirror_map_X}.

If $\beta_{\mathbf{d}}=\beta_{\nu_{j_0}}$ is a basic orbi-disk class corresponding to $\nu_{j_0}\in {\rm Box}'(\Sigma)^{\operatorname{age}=1}$ for some $j_0\in \{m,m+1,\dots,m'-1\}$, then we have
\begin{gather}
 \sum_{\alpha\in H_2^{\rm eff}(\bX)}\sum_{l\geq 0}\sum_{\nu_1,\dots,\nu_l\in {\rm Box}'(\Sigma_{\beta_{\mathbf d}})^{\operatorname{age}=1}}\frac{\prod\limits_{i=1}^l\tau_{\nu_i}}{l!}n^\bX_{1,l,\beta_{\nu_{j_0}}+\alpha}([{\rm pt}]_L; \prod_{i=1}^l \mathbf{1}_{\nu_i})q^\alpha \nonumber\\
 \qquad {} = y^{D^\vee_{j_0}} \exp\left(-\sum_{i\notin I_{j_0}}c_{j_0i}A^{\bX_{\beta_{\mathbf d}}}_i(y(q,\tau)) \right),\label{eqn:formula_for_generating_function_orb_disk}
\end{gather}
via the inverse $y=y(q,\tau)$ of the toric mirror map \eqref{eqn:toric_mirror_map_X}, where $D_{j_0}^\vee \in \mathbb{K}_{\rm eff}$ is the class defined in \eqref{eqn:dual_of_D_j}, $I_{j_0}\in \mathcal{A}$ is the anticone of the minimal cone containing $\bb_{j_0} = \nu_{j_0}$ and $c_{j_0i} \in \Q\cap [0,1)$ are rational numbers such that $\bb_{j_0} = \sum\limits_{i\notin I_{j_0}} c_{j_0i}\bb_i$.
\end{thm}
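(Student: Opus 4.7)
The plan is to reduce the open invariants of $\bX$ to those of the toric Calabi--Yau suborbifold $\bX_{\beta_{\mathbf d}}$ via Proposition \ref{prop:op=cl}, and then invoke the explicit formulas of \cite{cclt} for toric Calabi--Yau orbifolds.

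First, I would apply Proposition \ref{prop:op=cl}: for each effective class $\alpha \in H_2(\bX)$ with $c_1(\bX)\cdot\alpha = 0$, the isomorphism of Kuranishi spaces yields
\begin{gather*}
n^\bX_{1,l,\beta_{\mathbf d}+\alpha}\big([{\rm pt}]_L;\mathbf{1}_{\nu_1},\dots,\mathbf{1}_{\nu_l}\big) = n^{\bX_{\beta_{\mathbf d}}}_{1,l,\beta_{\mathbf d}+\alpha}\big([{\rm pt}]_L;\mathbf{1}_{\nu_1},\dots,\mathbf{1}_{\nu_l}\big).
\end{gather*}
By Lemma \ref{lem:in_cy} the corresponding moduli space is empty unless $\alpha$ lies in $H_2^{\rm eff}(\bX_{\beta_{\mathbf d}})$ and each twisted sector $\nu_i$ comes from ${\rm Box}'(\Sigma_{\beta_{\mathbf d}})$; moreover the age condition forces $\operatorname{age}(\nu_i)=1$ in order for the Maslov constraint \eqref{critmaslov} to be met, since every age-one basic orbi-disk has Maslov index exactly two. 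Thus the left-hand sides of \eqref{eqn:formula_for_generating_function_sm_disk} and \eqref{eqn:formula_for_generating_function_orb_disk} reorganise into generating functions of orbi-disk invariants for $\bX_{\beta_{\mathbf d}}$.

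Second, I would apply the main theorems of \cite{cclt}, which compute precisely these generating functions for any Gorenstein toric Calabi--Yau orbifold. The preceding lemma shows that $\bX_{\beta_{\mathbf d}}$ is toric Calabi--Yau, and the \cite{cclt} formulas return $\exp\bigl(-A^{\bX_{\beta_{\mathbf d}}}_{i_0}(y)\bigr)$ in the smooth basic case and $y^{D_{j_0}^\vee}\exp\bigl(-\sum_{i\notin I_{j_0}} c_{j_0 i}\,A^{\bX_{\beta_{\mathbf d}}}_i(y)\bigr)$ in the orbi-disc basic case, expressed through the inverse of the toric mirror map of $\bX_{\beta_{\mathbf d}}$. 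Substituting this into the reorganised left-hand side produces the right-hand sides of \eqref{eqn:formula_for_generating_function_sm_disk}--\eqref{eqn:formula_for_generating_function_orb_disk}.

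The main obstacle is bookkeeping: one must verify that the mirror map of the Calabi--Yau subvariety $\bX_{\beta_{\mathbf d}}$ supplied by \cite{cclt} is compatible with the mirror map \eqref{eqn:toric_mirror_map_X} of the ambient $\bX$ as written in the theorem. Concretely, I would check that (i) under the splitting \eqref{eqn:splitting_L_dual} the basis $\{p_1,\dots,p_{r'}\}$ restricts to a basis of the relevant curve classes of $\bX_{\beta_{\mathbf d}}$ and the extra vectors of $\bX$ that lie outside $F$ decouple from the sums defining $A^{\bX_{\beta_{\mathbf d}}}_j$; (ii) only $d\in \mathbb{K}_{\rm eff}$ supported on curves in $\bX_{\beta_{\mathbf d}}$ contribute non-trivially to $\Omega^{\bX_{\beta_{\mathbf d}}}_j$, so $A^{\bX_{\beta_{\mathbf d}}}_j(y)$ depends only on the coordinates $y_a$ dual to those curve classes and on the orbifold parameters $\tau_{\bb_j}$ with $\bb_j \in F$; (iii) the resulting inverse mirror map exists as a formal power series (and converges by the semi-Fano hypothesis, which bounds the valuations of the $A^{\bX_{\beta_{\mathbf d}}}_j$) so that $y(q,\tau)$ is well-defined. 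Once these compatibilities are unpacked, the formulas transport verbatim from \cite{cclt} to the statement of Theorem \ref{thm:formula}.
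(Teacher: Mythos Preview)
Your proposal is correct and follows essentially the same approach as the paper: use Proposition~\ref{prop:op=cl} to replace the $\bX$-invariants by $\bX_{\beta_{\mathbf d}}$-invariants, rewrite the generating function as one over $H_2^{\rm eff}(\bX_{\beta_{\mathbf d}})$, and then apply \cite[Theorem~1.4]{cclt}. The paper's proof is even terser than yours and does not spell out the bookkeeping compatibility you flag; note in particular that the mirror map~\eqref{eqn:toric_mirror_map_X} in the statement is already written using the functions $A_j^{\bX_{\beta_{\mathbf d}}}$ built from the toric data of the suborbifold, so there is less to reconcile than your final paragraph suggests.
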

\begin{proof}By Proposition \ref{prop:op=cl}, $n_{1,l,\beta}^\bX([{\rm pt}]_{L};\mathbf{1}_{\nu_1},\dots,\mathbf{1}_{\nu_l})= n_{1,l,\beta}^{\bX_{\beta_{\mathbf d}}}([{\rm pt}]_{L};\mathbf{1}_{\nu_1},\dots,\mathbf{1}_{\nu_l})$, and so the l.h.s.\ of~\eqref{eqn:formula_for_generating_function_sm_disk} is equal to
\begin{gather*} \sum_{\alpha\in H_2^{\rm eff}(\bX_{\beta_{\mathbf d}})}\sum_{l\geq 0}\sum_{\nu_1,\dots,\nu_l\in {\rm Box}'(\Sigma_{\beta_{\mathbf d}})^{\operatorname{age}=1}}\frac{\prod\limits_{i=1}^l\tau_{\nu_i}}{l!}n^{\bX_{\beta_{\mathbf d}}}_{1,l,\beta_{i_0}+\alpha}\left([{\rm pt}]_L; \prod_{i=1}^l \mathbf{1}_{\nu_i}\right)q^\alpha, \end{gather*}
which in turn is equal to $\exp\big({-}A^{\bX_{\beta_{\mathbf d}}}_{i_0}(y(q,\tau))\big)$ by \cite[Theorem~1.4]{cclt}. The deduction for~\eqref{eqn:formula_for_generating_function_orb_disk} is similar.
\end{proof}

To combine all the invariants into a single expression, one defines the orbi-disc potential
\begin{gather}
W=\sum_{\beta_{\mathbf{d}}} \sum_{\alpha\in H_2^{\rm eff}(\bX)}\sum_{l\geq 0}\sum_{\nu_1,\dots,\nu_l\in {\rm Box}'(\Sigma_{\beta_{\mathbf d}})^{\operatorname{age}=1}}\frac{\prod\limits_{i=1}^l\tau_{\nu_i}}{l!} q^\alpha n^\bX_{1,l,\beta_{\mathbf{d}}+\alpha}\left([{\rm pt}]_L; \prod_{i=1}^l \mathbf{1}_{\nu_i}\right) Z^{\beta_{\mathbf{d}}},
\label{eq:W}
\end{gather}
where $\beta_{\mathbf{d}}$ runs over all the basic smooth or orbi-disc classes, and $Z^{\beta_{\mathbf{d}}}$ are monomials associated to $\beta_{\mathbf{d}}$. See \cite[Definition~19]{cllt} for more detail. The above theorem gives an explicit expression of~$W$ via the mirror map.

\begin{Example} $\bP^2/\Z_3$ is a Gorenstein Fano toric orbifold. Its fan and polytope pictures are shown in Fig.~\ref{fig:P2mod3}. It has three toric divisors $D_1$, $D_2$, $D_3$ corresponding to the rays generated by $v_1=(-1,-1)$, $v_2=(2,-1)$, $v_3=(-1,2)$. By pairing with the dual vectors $(1,0)$ and $(0,1)$, the linear equivalence relations are $2D_2-D_3-D_1\sim 0$ and $2D_3-D_2-D_1\sim 0$, and so $D_1\sim D_2\sim D_3$. It has three orbifold points corresponding to the three vertices in the polytope picture. Locally it is $\C^2/\Z_3$ around each orbifold point.
	
\begin{figure}[htb!]\centering
\includegraphics[scale=0.3]{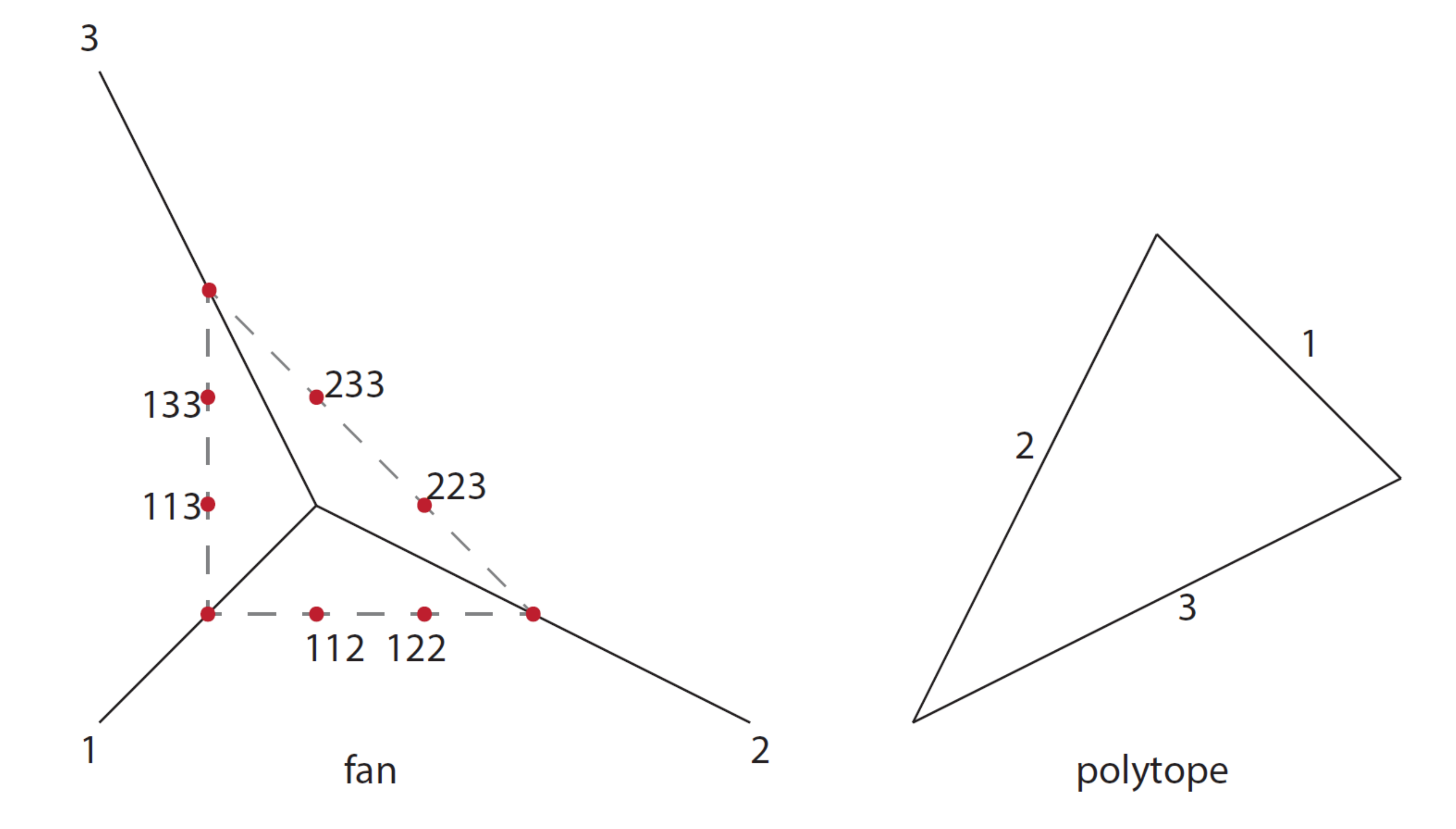}
\caption{The fan and polytope picture for $\bP^2/\Z_3$.}\label{fig:P2mod3}
\end{figure}

Fix a Lagrangian torus fiber. $\bP^2/\Z_3$ has nine basic orbi-disk classes corresponding to the nine lattice points on the boundary of the fan polytope. Three of them are smooth disk classes and denote them by $\beta_1$, $\beta_2$, $\beta_3$. The basic orbi-disk classes corresponding to the two lattice points $(2v_1+v_2)/3$ and $(v_1+2v_2)/3$ are denoted by $\beta_{112}$ and $\beta_{122}$, which pass through the twisted sectors $\nu_{112}$ and $\nu_{122}$ respectively. Then $2 \beta_1 + \beta_2 - 3 \beta_{112}$ (or $2 \beta_2 + \beta_1 - 3 \beta_{122}$) is the class of a constant orbi-sphere passing through the twisted sector $\nu_{112}$ (or $\nu_{122}$ resp.). In particular the area of $ \beta_{112}$ equals to $(2\beta_1+\beta_2)/3$. Other basic orbi-disk classes have similar notations.
	
Theorem~\ref{thm:formula} provides a formula for the open GW invariants $n^\bX_{1,l,\beta_{112}}\Big([{\rm pt}]_L; \prod\limits_{i=1}^l \mathbf{1}_{\nu_i}\Big)$ where $\nu_i$ is either $\nu_{112}$ or $\nu_{122}$ for each~$i$. To write down the invariants more systematically, we consider the open GW potential as follows.
	
Let $q$ be the K\"ahler parameter of the smooth sphere class $\beta_1+\beta_2+\beta_3 \in H_2\big(\bP^2/\Z_3\big)$. The basic orbi-disk classes correspond to monomials in the disk potential $q^{\beta} z^{\partial \beta}$, where $q^{\beta_1}=q^{\beta_2}=q^{\beta_{112}}=q^{\beta_{122}}=1$, $q^{\beta_3}=q^{\beta_1+\beta_2+\beta_3}=q$, $q^{\beta_{223}}=q^{(2\beta_2+\beta_3)/3}=q^{\beta_3/3}=q^{1/3}$, and similar for other basic orbi-disk classes. The K\"ahler parameters corresponding to the twisted sectors $\nu_{112}$, $\nu_{122}$ are denoted as $\tau_{112},\tau_{122}$ (and similar for other twisted sectors).
	
By \cite[Example~1, Section~6.5]{cclt}, the open GW potential for $\C^2/\Z_3$ is given by
\[
w(z-\kappa_0(\tau_{112},\tau_{122}))(z-\kappa_1(\tau_{112},\tau_{122}))(z-\kappa_2(\tau_{112},\tau_{122}))
\] where
\begin{gather*}
\kappa_k(\tau_1,\tau_2)=\zeta^{2k+1}\prod_{r=1}^{2}\exp\left(\frac{1}{3}\zeta^{(2k+1)r}\tau_r \right), \qquad \zeta:=\exp\big(\pi\sqrt{-1}/3\big).
\end{gather*}
By Proposition \ref{prop:op=cl}, the disk invariants of $\bP^2/\Z_3$ equal to those of $\C^2/\Z_3$. Thus the open GW potential of $\bP^2/\Z_3$ is given by
\begin{gather*}
W = z^{-1}w^{-1}(z-\kappa_0(\tau_{112},\tau_{122}))(z-\kappa_1(\tau_{112},\tau_{122}))(z-\kappa_2(\tau_{112},\tau_{122})) \\
\hphantom{W =}{} + z^{-1}w^{-1}\big(q^{1/3}w-\kappa_0(\tau_{113},\tau_{133})\big) \big(q^{1/3}w-\kappa_1(\tau_{113},\tau_{133})\big)\big(q^{1/3}w-\kappa_2(\tau_{113},\tau_{133})\big)\\
\hphantom{W =}{} + z^{2}w^{-1}\big(q^{1/3}z^{-1}w-\kappa_0(\tau_{223},\tau_{233})\big) \big(q^{1/3}z^{-1}w-\kappa_1(\tau_{223},\tau_{233})\big)\\
\hphantom{W =}{}\times \big(q^{1/3}z^{-1}w-\kappa_2(\tau_{223},\tau_{233})\big)
- z^{-1}w^{-1} - z^{2}w^{-1} - q z^{-1}w^{2}.
\end{gather*}
Then the generating functions of open orbifold GW for $\beta_{112}$ and $\beta_{122}$ are given by the coefficients of $w^{-1}$ and $zw^{-1}$ in $W$ respectively. The first few terms are given by the following table.

\begin{table}[h!]\centering\small
	\begin{tabular}{|c|c|c|c|c|c|c|c|}
		\hline
		$n_{(a,b)}$ & $a=0$ & $a=1$ & $a=2$ & $a=3$ & $a=4$ & $a=5$ & $a=6$ \\
		\hline
		$b=0$ & $0$ & $1$ & $0$ & $0$ & $1/648$ & $0$ & $0$ \\
		\hline
		$b=1$ & $0$ & $0$ & $-1/18$ & $0$ & $0$ & $-1/29160$ & $0$ \\
		\hline
		$b=2$ & $1/6$ & $0$ & $0$ & $1/972$ & $0$ & $0$ & $1/3149280$ \\
		\hline
		$b=3$ & $0$ & $-1/162$ & $0$ & $0$ & $-1/104976$ & $0$ & $0$ \\
		\hline
		$b=4$ & $0$ & $0$ & $1/11664$ & $0$ & $0$ & $1/18895680$ & $0$ \\
		\hline
		$b=5$ & $\!-1/9720\!$ & $0$ & $0$ & $-1/1574640$ & $0$ & $0$ & $\!-1/5101833600\!$ \\
		\hline
		$b=6$ & $0$ & $1/524880$ & $0$ & $0$ & $1/340122240$ & $0$ & $0$ \\
		\hline		
	\end{tabular}
\end{table}

In the above table,
\begin{gather*} n_{(a,b)} = n_{1,a+b,\beta_{112}}\big([{\rm pt}]_L;\mathbf{1}_{\nu_{112}}^{\otimes a},\mathbf{1}_{\nu_{122}}^{\otimes b}\big) = n_{1,a+b,\beta_{122}}\big([{\rm pt}]_L;\mathbf{1}_{\nu_{112}}^{\otimes b},\mathbf{1}_{\nu_{122}}^{\otimes a}\big).\end{gather*}
We observe that all invariants satisfy `reciprocal integrality', namely their reciprocals are integers. Moreover, all these integers are divisible by~$6$. $n_{(k,k)}=0$. Furthermore, the sign is alternating with respect to~$b$.
\end{Example}

\subsection*{Acknowledgments}
K.~Chan was supported by a Hong Kong RGC grant CUHK14314516 and direct grants from CUHK. C.-H.~Cho was supported by the NRF grant funded by the Korea government(MSIT) (No.~2017R1A22B4009488). S.-C.~Lau was partially supported by the Simons collaboration grant \#580648. N.C.~Leung was supported by Hong Kong RGC grants CUHK14302215 \& CUHK14303516 and direct grants from CUHK. H.-H.~Tseng was supported in part by NSF grant DMS-1506551.

\pdfbookmark[1]{References}{ref}
\LastPageEnding

\end{document}